\newcommand{\scrC}{\mathcal{C}} 
\newcommand{\scrF}{\mathcal{F}}
\newcommand{\scrG}{\mathcal{G}}
\newcommand{\scrI}{\mathcal{I}}
\newcommand{\scrM}{\mathcal{M}}
\newcommand{\scrO}{\mathcal{O}}
\newcommand{\scrQ}{\mathcal{Q}}
\newcommand{\bA}{\mathbf{A}}
\newcommand{\bL}{\mathbf{L}}
\newcommand{\bR}{\mathbf{R}}
\newcommand{\bbD}{\mathbb{D}}
\newcommand{\bbF}{\mathbb{F}}
\newcommand{\bbG}{\mathbb{G}}
\newcommand{\bbZ}{\mathbb{Z}}
\newcommand{\supp}{\mathrm{supp}}
\newcommand{\st}{\mathrm{star}}
\newcommand{\Pic}{\mathrm{Pic}}
\newcommand{\alt}{\mathrm{alt}}
\newcommand{\step}{\mathrm{step}}
\newcommand{\Spec}{\mathrm{Spec}}
\newcommand{\Hom}{\mathrm{Hom}}
\newcommand{\spA}{{^\bA\!}}
\newcommand{\spAp}{{^{\bA,p}\!}}
\newcommand{\uchi}{\underline{\chi}}
\newcommand{\cod}{\mathrm{cod}}
\newtheorem{dummy}{Dummy}[section]
\newtheorem{proposition}[dummy]{Proposition}
\newtheorem{corollary}[dummy]{Corollary}
\newtheorem{theorem}[dummy]{Theorem}
\newtheorem*{uncorollary}{Corollary}
\newtheorem*{maintheorem}{Main Theorem}
\theoremstyle{definition}
\newtheorem{definition}[dummy]{Definition}
\newtheorem{remark}[dummy]{Remark}
\newtheorem{remarks}[dummy]{Remarks}
\newtheorem{warning}[dummy]{Warning}
\title{Staggered t-structures on toric varieties}
\author{David Treumann}
\date{June 3, 2008}
\begin{document}

\maketitle

\begin{abstract}
Achar has recently introduced a family of t-structures on the derived category of equivariant coherent sheaves on a $G$-scheme, generalizing the perverse coherent t-structures of Bezrukavnikov and Deligne.  They are called \emph{staggered} t-structures, and their main point of interest so far is that they are more often self-dual.  In this paper we investigate these t-structures on the $T$-equivariant derived category of a toric variety.
\end{abstract}

\section{Introduction}

Let $G$ be an algebraic group, let $Y$ be a $G$-scheme, and let $D^b_G(Y)$ denote the bounded derived category of $G$-equivariant coherent sheaves on $Y$.  Achar has introduced \cite{pramod} a family of t-structures -- the \emph{staggered} t-structures -- on $D^b_G(Y)$ generalizing the perverse coherent t-structures of Bezrukavnikov and Deligne \cite{bezru}.  

Serre-Grothendieck duality provides an equivalence  $D^b_G(Y)^{op} \cong D^b_G(Y)$.  So far, the main point of interest about the staggered t-structures is that they are more often self-dual (up to shifts and line bundles, or put another way with respect to right choice of dualizing sheaf) than the Bezrukavnikov-Deligne t-structures.  Indeed, for the Bezrukavnikov-Deligne t-structures to be self-dual it is necessary that the dimension of the $G$-orbits are either all even or all odd, which rules out many interesting group actions.

The purpose of this paper is to parameterize the staggered t-structures combinatorially when $G$ is a torus and $Y$ is a toric variety.  The following, which is proved ultimately in theorem \ref{thmpervtor}, is our main result:

\begin{maintheorem}

Let $T$ be a torus and let $Y$ be a $T$-toric variety with associated fan $\Sigma$ in $X_*(T)$.  For each cone $C$ of $\Sigma$, let $O_C \subset Y$ denote the reduced $T$-orbit corresponding to $C$, let $i_C:O_C \hookrightarrow Y$ denote the inclusion map, and let $\bL i_C^*:D^b_T(Y) \to D^-_T(O_C)$ and $\bR i_C^! :D^b_T(Y) \to D^+_T(O_C)$ denote the derived restriction and corestriction functors.

Let $\bA$ be an ``s-structure'' on $\Sigma$ (definition \ref{sfan}) -- that is, a collection of elements $A_C \in -C$, for each cone $C \in \Sigma$, with the further property that whenever $A_C = 0$ and $C' \subset C$ is a face of $C$, we have $A_{C'} = 0$ as well.  

Let $p$ be a $\bbZ$-valued function on the set of cones in $\Sigma$, and suppose that $p$ satisfies the perversity conditions of definition \ref{pfan}.  Define full subcategories
$\spAp D^{\leq 0}$ and $\spAp D^{\geq 0}$ of $D^b_T(Y)$ by
$$\begin{array}{ccc}
\spAp D^{\leq 0} & =  & \{ \scrF \mid \forall C\, \langle \xi, A_C\rangle \leq p(C) - k \text{ \rm whenever }\xi\text{ \rm is a weight
of }h^k(\bL i_C^* \scrF)\} \\
\spAp D^{\geq 0} & = & \{ \scrF \mid \forall C \, \langle \xi, A_C \rangle \geq p(C) - k \text{ \rm whenever } \xi \text{ \rm is a weight of }h^k(\bR i_C^! \scrF)\}
\end{array} 
$$
Then $(\spAp D^{\leq 0}, \spAp D^{\geq 0})$ is a bounded, nondegenerate t-structure on $D^b_T(Y)$.  We will call it the \emph{$(\bA,p)$-staggered t-structure}.  The $(\bA,p)$-staggered t-structures, as $\bA$ runs over s-structures and $p$ runs over perversity functions, are precisely the t-structures constructed in \cite{pramod}.
\end{maintheorem}

We remark that the definition of s-structure given in \cite{pramod} is fairly complicated and inexplicit.  The fact that, at least on toric varieties, these s-structures are equivalent to the more concrete objects defined in the main theorem is proved in theorem \ref{thm-stor} and may also be of interest.

It is easy to combine the main theorem with Achar's general results to exhibit many self-dual examples of staggered t-structures on toric varieties.  
Recall that we may identify the equivariant Picard group $\Pic_T(Y)$ of a toric variety with the group of $\bbZ$-valued piecewise-linear functions on the fan $\Sigma$ (see section \ref{sec-alttor}).  If $\bA$ is an s-structure on $\Sigma$, we have for each $C$ an evaluation map $\uchi \mapsto \uchi(-A_C)$, where $\uchi$ denotes the piecewise-linear function on $\Sigma$ corresponding to an element of $\Pic_T(Y)$.

\begin{uncorollary}
Let $T \cong \bbG_m^n$ be a torus, let $Y$ be a $T$-toric variety, and let $\bA$ be an s-structure on the fan $\Sigma$ associated to $Y$.  Then the following are equivalent:
\begin{enumerate}
\item There exists a perversity function $p$ and a dualizing complex $\bbD$ with the property that the $(\bA,p)$-staggered t-structure is self-dual with respect $\bbD$.
\item There exists a piecewise-linear function $\uchi$ on $\Sigma$ such that $\mathrm{dim}(C)+ \uchi(-A_C)$ is even for all $C$, and such that 
$$\uchi(-A_{C'}) - \uchi(-A_{C}) +1 \geq 0$$
for all pairs of cones $C,C'$ with $C$ a codimension one face of $C'$.
\end{enumerate}
\end{uncorollary}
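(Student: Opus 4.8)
The plan is to combine theorem~\ref{thmpervtor} with Achar's general duality theorem for staggered t-structures. Recall from \cite{pramod} that, with respect to a dualizing complex $\bbD$ on $Y$, Serre--Grothendieck duality carries the staggered t-structure attached to an s-structure $\bA$ and a perversity $p$ to the staggered t-structure attached to the \emph{same} s-structure $\bA$ and the dual perversity $\bar p^{\bbD}$, characterized by $p(C) + \bar p^{\bbD}(C) = \cod(C)$, where $\cod$ is the ``staggered codimension'' function determined by $\bA$ and $\bbD$. By theorem~\ref{thmpervtor} the staggered t-structures on $D^b_T(Y)$ are exactly the ones of the form $(\spAp D^{\leq 0}, \spAp D^{\geq 0})$, and distinct perversities yield distinct t-structures; hence the $(\bA,p)$-staggered t-structure is self-dual with respect to $\bbD$ if and only if the $(\bA,\bar p^{\bbD})$- and $(\bA,p)$-staggered t-structures coincide, i.e.\ if and only if $2p(C) = \cod(C)$ for every cone $C$. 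Thus (1) is equivalent to the existence of a dualizing complex $\bbD$ for which $\cod(C)$ is even for all $C$ and for which $C \mapsto \tfrac12\cod(C)$ satisfies the perversity conditions of definition~\ref{pfan}.

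The next step is to compute $\cod$ in toric terms. A $T$-equivariant dualizing complex on $Y$ is unique up to twisting by an equivariant line bundle and a cohomological shift, and the normalization implicit in the staggered formalism pins down the shift --- concretely, $\cod$ must vanish on the cone $\{0\}$, where $A_{\{0\}} = 0$ and $O_{\{0\}}$ is the dense orbit. Using the identification of $\Pic_T(Y)$ with piecewise-linear functions on $\Sigma$ from section~\ref{sec-alttor}, write $\bbD = \bbD_{\uchi}$ for the dualizing complex corresponding to $\uchi$. Each orbit $O_C \cong T/T_C$ is a torus, hence smooth with trivial canonical bundle, so $\bR i_C^!\bbD_{\uchi}$ is the structure sheaf of $O_C$ equipped with an equivariant twist, placed in a single cohomological degree governed by $\dim O_C = \dim T - \dim C$ and carrying the $T$-weight read off from $\uchi$ along the cone $C$; feeding this, together with the defining property of $\bar p^{\bbD_{\uchi}}$, into the condition defining $\spAp D^{\geq 0}$ yields
$$\cod(C) = \dim(C) + \uchi(-A_C),$$
in which $\dim(C)$ is the codimension of $O_C$ in $Y$ and $\uchi(-A_C)$ is the ``step'' contributed by the dualizing complex at $O_C$. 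Consequently $2p(C) = \cod(C)$ has a $\bbZ$-valued solution $p$ exactly when $\dim(C) + \uchi(-A_C)$ is even for all $C$ --- the first condition of (2) --- and then necessarily $p(C) = \tfrac12\bigl(\dim(C) + \uchi(-A_C)\bigr)$.

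It remains to decide when this $p$ is a perversity. The conditions of definition~\ref{pfan} compare $p$ at a cone $C'$ with $p$ at each codimension-one face $C$, and (just as for the Bezrukavnikov--Deligne perversities) split into a monotonicity inequality $p(C) \leq p(C')$ and a comonotonicity inequality $p(C') - p(C) \leq \cod(C') - \cod(C)$. Under the hypothesis $2p = \cod$ these two inequalities are equivalent --- each reduces to $\cod(C) \leq \cod(C')$ --- and, since $\dim(C') = \dim(C) + 1$ gives $\cod(C') - \cod(C) = 1 + \uchi(-A_{C'}) - \uchi(-A_C)$, the surviving requirement is precisely $\uchi(-A_{C'}) - \uchi(-A_C) + 1 \geq 0$, the second condition of (2); the remaining normalization-type clauses of definitions~\ref{pfan} and \ref{sfan} (for instance the vanishing condition on the $A_C$) hold automatically. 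This gives (1)$\Rightarrow$(2). Conversely, given $\uchi$ as in (2), put $\bbD = \bbD_{\uchi}$ and $p(C) = \tfrac12\bigl(\dim(C) + \uchi(-A_C)\bigr)$: by the first condition $p$ is $\bbZ$-valued, by the second it is a perversity, and $2p = \cod$ yields $p = \bar p^{\bbD_{\uchi}}$, so the $(\bA,p)$-staggered t-structure is self-dual with respect to $\bbD_{\uchi}$, establishing (2)$\Rightarrow$(1).

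I expect the main obstacle to be the computation in the second paragraph: transcribing Achar's dual-perversity formula correctly, and --- above all --- normalizing the dualizing complex so that $\cod(C)$ comes out equal to $\dim(C) + \uchi(-A_C)$ exactly, with no stray additive constant. The essential inputs there are the smoothness and triviality of the canonical bundle of each torus orbit, the behaviour of $\bR i_C^!$ under twisting by a line bundle, and the value of $\cod$ on the dense orbit. A secondary point requiring care is confirming that definition~\ref{pfan} really does collapse to the single stated inequality, rather than to a pair of them, once $p = \bar p^{\bbD_{\uchi}}$ is imposed.
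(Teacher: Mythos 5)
Your argument for (2)\(\Rightarrow\)(1) is correct and is essentially the paper's: corollary~\ref{cor-dual} verifies that \(p(C)=\tfrac12(\dim C+\uchi(-A_C))\) is a perversity exactly as you do (both the monotonicity and comonotonicity inequalities collapse to \(\uchi(-A_{C'})-\uchi(-A_C)+1\geq 0\)), and then checks self-duality with respect to \(K_Y\otimes L(\uchi)[n]\) using the \(\supp_T\) computation from the proof of theorem~\ref{thmpervtor}, which is the concrete version of the abstract ``\(\bbD\) exchanges \(p\) and \(\overline{p}\)'' fact you invoke. The paper, however, never explicitly argues (1)\(\Rightarrow\)(2) --- it only proves corollary~\ref{cor-dual} and calls the equivalence ``easy'' --- so your second half is a genuine supplement rather than a restatement.

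On that (1)\(\Rightarrow\)(2) half, two points deserve flagging. First, you rely on ``distinct perversities yield distinct t-structures'' to pass from self-duality to the equation \(2p=\cod\); this is true (for fixed \(\bA\), restricting to the orbit \(O_C\) and testing against twists of \(\scrO_{O_C}\) recovers \(p(C)\) from \({}^{\bA,p}D^{\leq 0}\)), but it is asserted rather than proved, and it is the crux of the necessity direction. Second, the assertion that the staggered formalism ``pins down the shift'' of \(\bbD\) so that \(\cod(\{0\})=0\) is not literally correct: the cohomological shift is a free parameter, and a general equivariant dualizing complex has the form \(K_Y\otimes L(\uchi)[n+m]\), giving \(\cod(C)=\dim(C)+\uchi(-A_C)+m\). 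What \emph{is} forced is that \(m\) be even (evaluate \(2p=\cod\) at the zero cone), after which the parity of \(\dim(C)+\uchi(-A_C)\) and the comonotonicity inequality are unaffected by the constant \(m\); so your conclusion stands, but the framing should be ``one may normalize \(m=0\) without loss of generality'' rather than ``the formalism pins down the shift.'' With those two clarifications, the proposal is a sound and complete proof of both directions.
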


A more precise version of this is stated in corollary \ref{cor-dual}.

\subsection{Notation and conventions}
We will work throughout over a fixed a field $\bbF$.  If $G$ is an affine algebraic group and $Y$ is a
noetherian $G$-scheme, let $\scrC_G(Y)$ and $\scrQ_G(Y)$ denote the abelian categories of $G$-equivariant coherent sheaves and $G$-equivariant quasicoherent sheaves on $Y$.  We will work with the following triangulated categories:
\begin{enumerate}
\item $D^b_G(Y) := D^b(\scrC_G(Y))$ is the bounded derived category of $\scrC_G(Y)$.
\item $D^-_G(Y) := D^-(\scrC_G(Y))$ is the bounded-above derived category of $\scrC_G(Y)$.
\item $D^+_G(Y)$ is the full subcategory of $D^+(\scrQ_G(Y))$ whose objects are complexes whose cohomology belongs to $\scrC_G(Y)$.
\end{enumerate}
We will denote the $i$th cohomology sheaf of an object $\scrF$ of one of these categories by $h^i(\scrF) \in \scrC_T(Y)$.  We will use $H^i(Y;\scrF)$ to denote the usual sheaf cohomology groups.

When $f:X \hookrightarrow Y$ is a $G$-equivariant inclusion, we have pullback operations on these derived categories which we will denote by $\bL f^*$ and $\bR f^!$.  The functor $\bR f^!:D^+_G(Y) \to D^+_G(X)$ is the right derived functor of the functor $f^!:\scrQ_G(Y) \to \scrQ_G(X)$ that takes  a sheaf to its maximal subsheaf scheme-theoretically supported on $X$.  More care has to be taken to define $\bL f^*$: it is not known whether there are always enough $G$-equivariant vector bundles to define $\bL f^*$ as a left-derived functor, even on toric varieties.  In this paper we will only make use of $\bL f^*$ (and $\bR f^!$) when $f$ may be written as a composition $j \circ i$, where $j$ is the inclusion of a $G$-stable affine open subscheme and $i$ is a closed embedding. On an affine scheme we do have enough equivariant vector bundles, and we may define $\bL f^*:D^-_G(Y) \to D^-_G(X)$ to be the composition of $j^*$ and the left derived functor of $i^*$.

We will discuss our conventions for toric varieties in section \ref{sec-torvar}.

\section{Review of $\mathrm{s}$-structures}
\label{sectwo}

The staggered t-structures are associated to pairs $(\bA,p)$ where $\bA$ is an ``s-structure'' and $p$ is a ``perversity function.''  The role played by $p$ is analogous to that of the original perversity functions in \cite{bbd} (and to the perversity functions appearing in the coherent setting in \cite{bezru}), but the notion of an ``s-structure'' is an innovation of \cite{pramod} whose place in the construction of t-structures is new.  The definition of s-structure is somewhat long, and its geometric meaning is still unclear at least to me, but in the next section we will see that s-structures on toric varieties are in correspondence with fairly simple combinatorial data.

In this section, let $G$ be an affine algebraic group, let $Y$ be a $G$-scheme, and let $\scrC_G(Y)$ denote the abelian category of $G$-equivariant coherent sheaves on $Y$.  Let us furthermore assume that $G$ acts on $Y$ with finitely many orbits.   This assumption applies to toric varieties and, in light of
 \cite[Theorem 10.2]{pramod} (reproduced as theorem \ref{achargluing} below), it allows us to simplify the definition of an s-structure.

\begin{definition}
Let $\scrC$ be an abelian category.  A \emph{Serre filtration} of $\scrC$ is a collection $\{\scrC_{\leq w}\}_{w \in \bbZ}$ of Serre subcategories of $\scrC$ with $\scrC_{\leq w} \subset \scrC_{\leq w+1}$ for each $w$.  We will sometimes denote a Serre filtration of an abelian category $\scrC$ by a symbol $\bA$.  Then we will denote the filtered pieces of $\scrC$ by ${^\bA \!} \scrC_{\leq w}$.
\end{definition}

An s-structure on a category of equivariant coherent sheaves is completely determined by a Serre filtration.  Of course, for a Serre filtration to determine an s-structure it must satisfy some conditions.  Before describing these we need some more notation.

\begin{definition}
Let $\scrC$ be an abelian category and let $\bA$ be a Serre filtration of $\scrC$.  
\begin{enumerate}
\item Define a new, decreasing filtration of $\scrC$ by full subcategories ${^\bA\!}\scrC_{\geq w}$, by setting
$${^\bA \!}\scrC_{\geq w} = \{c \mid \Hom(x,c) =  0 \text{ for all } x \in {^\bA \!}\scrC_{\leq w-1}\}$$
\item Suppose $\scrC = \scrC_G(Y)$.  Let $Y_0 \subset Y$ be the union of open $G$-orbits.  Then define a second decreasing filtration of $\scrC$ by setting
$${^\bA\!}\tilde{\scrC}_{\geq w} = \{\scrF \mid \text{there exists an $\scrF_1 \in {^\bA\!}\scrC_{\geq w}$ such that $\scrF_1 \vert_{Y_0} \cong \scrF\vert_{Y_0}$}\}$$
\end{enumerate}
\end{definition}

Thus, $\spA\scrC_{\geq w}$ is the ``right orthogonal'' to $\spA\scrC_{\leq w-1}$.  Note that the $\spA\scrC_{\geq w}$ are not usually Serre subcategories, though they are closed under subobjects and extensions.

Now we are ready to introduce s-structures:

\begin{definition}
\label{def-s-struct}
Let $G$ be an affine algebraic group, let $Y$ be a scheme on which $G$ acts with finitely many orbits, and let $\scrC_G(Y)$ be the category of $G$-equivariant coherent sheaves on $Y$.  A Serre filtration $\bA$ of $\scrC_G(Y)$ is an \emph{s-structure} if it satisfies the following conditions:

\begin{enumerate}
\item[(S4)] For any $\scrF \in \scrC_G(Y)$ and any $w \in \bbZ$, there is a short exact sequence
$$0 \to \scrF' \to \scrF \to \scrF'' \to 0$$
with $\scrF' \in \spA \scrC_{\leq w}$ and $\scrF'' \in \spA \scrC_{\geq w+1}$
\item[(S5)] For any $\scrF \in \scrC_G(Y)$, there are integers $w$ and $v$ such that $\scrF \in \spA\scrC_{\geq w}$ and $\scrF \in \spA\scrC_{\leq v}$.
\item[(S6)] If $\scrF \in \spA\scrC_{\leq w}$ and $\scrG \in \spA\scrC_{\leq v}$, then $\scrF \otimes \scrG \in \spA\scrC_{\leq w+v}$.
\item[(S7)] For each $w \in \bbZ$, $\spA\tilde{\scrC}_{\geq w}$ is a Serre subcategory of $\scrC$.
\item[(S8)] If $\scrF \in \spA\tilde{\scrC}_{\geq w}$ and $G \in \spA\tilde{\scrC}_{\geq v}$, then $\scrF \otimes \scrG \in \spA\tilde{\scrC}_{\geq w+v}$.
\end{enumerate}
\end{definition}

\begin{remark}
Technically this is Achar's definition of an ``almost s-structure''.  We have omitted axioms (S9) and (S10).  (We have also omitted axioms (S1)-(S3).  They define what we have called a Serre filtration.)  However, as is pointed out in \cite{pramod}, it is a consequence of theorem \ref{achargluing} that every almost s-structure is an s-structure when $G$ acts with finitely many orbits.
\end{remark}

An s-structure endows every equivariant coherent sheaf with a canonical filtration.  Indeed, axiom (S4) provides a functor $\sigma_{\leq w}:\scrC_G(Y) \to \scrC_G(Y)$ called \emph{s-truncation}, which carries a sheaf $\scrF$ to its maximal subsheaf contained in $\spA \scrC_G(Y)_{\leq w}$.  The functor $\sigma_{\leq w}$ is left exact, since we have a natural inclusion $\sigma_{\leq w} \scrF \subset \scrF$.

\subsection{Induced and glued s-structures}

Given an s-structure on a $G$-scheme $Y$, we may define an s-structure on any locally closed $G$-stable subscheme of $Y$:

\begin{definition}
Let $G$ be an affine algebraic group and let $Y$ be a scheme on which $G$ acts with finitely many orbits.  Let $\bA$ be a Serre filtration of $\scrC_G(Y)$.
\begin{enumerate}
\item For each $G$-stable open subscheme $U$ of $Y$, define a Serre filtration on $\scrC_G(U)$ by
$$\spA\scrC_G(U)_{\leq w} = 
\{ \scrF \mid 
\text{
there exists an $\scrF_1 \in \scrC_G(X)_{\leq w}$ such that $\scrF_1\vert_U \cong \scrF$} \}$$

\item For each $G$-stable closed subscheme  $i:Z \hookrightarrow Y$, define a Serre filtration on $\scrC_G(Z)$ by

$$\spA\scrC_G(Z)_{\leq w} = \{ \scrF \mid i_* \scrF \in \scrC_G(Y)_{\leq w}\}$$
\end{enumerate}
\end{definition}

It is proved in \cite[Lemma 3.9]{pramod} that $\{\spA\scrC_G(U)_{\leq w}\}$ and $\{\spA\scrC_G(Z)_{\leq w}\}$ are s-structures whenever $\{\spA\scrC_G(X)_{\leq w}\}$ is an s-structure.  The following gluing theorem is a sort of converse:

\begin{theorem}[{\cite[Theorem 10.2]{pramod}}]
\label{achargluing}
Let $G$ be an affine algebraic group and let $Y$ be a scheme on which $G$ acts with finitely many orbits.  For each orbit $O$, let $\scrI_O \subset \scrO_Y$ denote the ideal sheaf corresponding to $\overline{O}$ with its reduced scheme structure.  Suppose that each orbit is endowed with an s-structure $A_O$, and that the following two conditions hold:
\begin{enumerate}
\item[(F1)] For each orbit $O$, $(i_O^* \scrI_O)\vert_O \in {^{A_O}\!}\scrC_G(O)_{\leq 0}$.
\item[(F2)] Each $\scrF \in {^{A_O}\!}\scrC_G(O)_{\leq w}$ admits an extension $\scrF_1 \in \scrC_G(\overline{O})$ whose restriction to any smaller orbit $O' \subset \overline{O}$ (via $i_{O'}^*$) is in ${^{A_{O'}}\!}\scrC_G(O')_{\leq w}$.
\end{enumerate}
Then there is a unique s-structure $\bA$ on $Y$ which induces the given s-structure $A_O$ on each orbit.  It is given by
$$\spA \scrC_{\leq w} = \{\scrF \mid i_O^* \scrF \in {^{A_O}\!}\scrC_G(O)_{\leq w} \text{ for each orbit }O\}$$
\end{theorem}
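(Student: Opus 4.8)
This is \cite[Theorem 10.2]{pramod}; here is the shape of the argument one would give. Write $\scrC = \scrC_G(Y)$, and for each orbit $O$ fix a factorization of the inclusion as $O \hookrightarrow U_O \hookrightarrow Y$ with the first map a closed embedding into a $G$-stable affine open $U_O$, so that $i_O^*$, which is right exact and monoidal, and its left derived functor $\bL i_O^*$ are available as in the introduction. The plan is to take the family $\{\spA\scrC_{\leq w}\}$ defined by the displayed formula and to verify, in this order: that it is a Serre filtration; that it satisfies (S4)--(S8); that it induces the given $A_O$ on each orbit; and that it is the unique s-structure that does so.

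First I would check that each $\spA\scrC_{\leq w}$ is a Serre subcategory of $\scrC$. That they increase with $w$ and are closed under quotients and extensions is formal from right exactness of the $i_O^*$ and the fact that each ${}^{A_O}\scrC_G(O)_{\leq w}$ is Serre; closure under \emph{sub}objects is exactly where hypothesis (F1) enters. Given $\scrF' \subseteq \scrF$ with $\scrF \in \spA\scrC_{\leq w}$, the kernel of $i_O^*\scrF' \to i_O^*\scrF$ is a quotient of $h^{-1}(\bL i_O^*(\scrF/\scrF')) = \mathrm{Tor}_1^{\scrO_Y}(\scrO_O, \scrF/\scrF')$, which a local computation with the $\scrI_O$-adic filtration realizes as a subquotient of $(i_O^*\scrI_O)|_O \otimes (\text{a subquotient of }i_O^*(\scrF/\scrF'))$; by (F1) together with axiom (S6) for $A_O$ this lies in ${}^{A_O}\scrC_G(O)_{\leq w}$, and since the image of $i_O^*\scrF'$ in $i_O^*\scrF$ does too, so does $i_O^*\scrF'$. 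Once the $\spA\scrC_{\leq w}$ are known to be Serre, axiom (S4) is almost free: $\scrC_G(Y)$ is Noetherian, so each $\scrF$ has a largest subsheaf $\sigma_{\leq w}\scrF$ lying in $\spA\scrC_{\leq w}$, and $\scrF/\sigma_{\leq w}\scrF$ has no nonzero subsheaf there, hence lies in the right orthogonal $\spA\scrC_{\geq w+1}$. The upper bound in (S5) holds by finiteness of the orbit set, and the lower bound because a sheaf $\scrG$ lying in every $\spA\scrC_{\leq w}$ has each $i_O^*\scrG$ in every ${}^{A_O}\scrC_G(O)_{\leq w}$, forcing $i_O^*\scrG = 0$ by (S5) on $O$ and hence $\scrG = 0$. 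The remaining axioms (S6)--(S8) follow from their counterparts on the orbits, using monoidality of $i_O^*$ and a parallel orbit-wise description of the filtrations $\spA\scrC_{\geq \bullet}$ and $\spA\tilde{\scrC}_{\geq \bullet}$.

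Next I would show $\spA\scrC_{\leq w}$ restricts to $A_O$ on each orbit. The inclusion ``$\subseteq$'' is formal (from $i_O^* i_{O*} \cong \mathrm{id}$); for ``$\supseteq$'' one takes $\scrH \in {}^{A_O}\scrC_G(O)_{\leq w}$, applies (F2) to extend it to a sheaf on $\overline O$ with all smaller-orbit restrictions in degree $\leq w$, pushes that forward to $Y$ -- it then lies in $\spA\scrC_{\leq w}$, its $i_{O'}^*$ vanishing for $O' \not\subseteq \overline O$ and being controlled by (F2) otherwise -- and restricts to $U_O$, where $\overline O$ meets $U_O$ precisely in $O$. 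For uniqueness, let $\bB$ be any s-structure inducing the $A_O$. If $\scrF \in {}^\bB\scrC_{\leq w}$ then, restricting to $U_O$ and using surjectivity of the counit $\scrF|_{U_O} \twoheadrightarrow i_{O*}i_O^*\scrF$, one gets $i_O^*\scrF \in {}^{A_O}\scrC_G(O)_{\leq w}$ for all $O$, so ${}^\bB\scrC_{\leq w} \subseteq \spA\scrC_{\leq w}$. For the reverse inclusion I would induct on the number of orbits meeting $\supp\scrF$: reduce to the scheme-theoretic support, split off the largest subsheaf supported away from the generic orbit $O$ (handled by the inductive hypothesis), and for the complementary sheaf $\scrG$ -- which agrees over $O$ with some $\scrH_1 \in {}^\bB\scrC_{\leq w}$ supplied by the fact that $\bB$ induces $A_O$ -- note that $\scrG$ embeds in the coherent sheaf $\scrG + \scrH_1$, whose quotient by $\scrH_1$ is supported away from $O$ and hence lies in ${}^\bB\scrC_{\leq w}$ by induction; so $\scrG + \scrH_1 \in {}^\bB\scrC_{\leq w}$, and therefore $\scrG \in {}^\bB\scrC_{\leq w}$ by closure under subobjects.

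The main obstacle is this final d\'evissage -- equivalently, proving that the explicitly defined family is \emph{large enough} to be an s-structure inducing the $A_O$, rather than merely being contained in every such s-structure. The formal axioms reduce cleanly to their orbit-wise analogues via the right-exact monoidal functors $i_O^*$ and Noetherianity, but manufacturing a global sheaf lying in a prescribed filtered piece genuinely requires the extension hypothesis (F2), and making the Noetherian induction close -- in particular, knowing that the formula-s-structures on the closed subschemes $\overline O$ are themselves the glued s-structures there -- is the delicate bookkeeping point.
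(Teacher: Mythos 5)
The paper does not prove this statement. It is quoted verbatim from Achar as \cite[Theorem 10.2]{pramod}, with no argument supplied; the only nearby reasoning in the paper is the proof of Corollary \ref{cor-local}, which takes the theorem as given. So there is no ``paper's own proof'' to compare your sketch against, and one can only assess it on its own terms. Your overall strategy --- establish that the displayed formula defines a Serre filtration using (F1), verify (S4)--(S8) orbit-wise via $i_O^*$, check that the induced s-structure on each $O$ is $A_O$ using (F2), and prove uniqueness by Noetherian d\'evissage --- is a reasonable reconstruction of how such a gluing theorem must go, and you place (F1) and (F2) where they genuinely belong.

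Two of the intermediate steps are stated more strongly than they should be. For closure under subobjects, $\mathrm{Tor}_1^{\scrO_Y}(\scrO_O,\scrF/\scrF')$ is \emph{not} in general a subquotient of $(i_O^*\scrI_O)\vert_O\otimes(\text{a subquotient of }i_O^*(\scrF/\scrF'))$; a one-variable computation with $\scrI=(x)$ and $\scrF/\scrF'=\scrO/(x^2)$ already shows the weights come out wrong by one. What one can get, working down the $\scrI_O$-adic filtration, is a filtration of $\mathrm{Tor}_1$ whose graded pieces are subquotients of \emph{tensor powers} $\bigl((i_O^*\scrI_O)\vert_O\bigr)^{\otimes m}\otimes(\ldots)$, and then (F1) together with (S6) still bounds the degree by $w$ because any tensor power of something in degree $\leq 0$ stays in degree $\leq 0$; so the conclusion is right but the claimed one-step bound is not. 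Second, in the uniqueness d\'evissage, ``$\scrG$ embeds in $\scrG+\scrH_1$'' presupposes an ambient sheaf containing both, which you have not constructed: $\scrG$ and $\scrH_1$ only agree after restriction to $O$. The repair is the kernel construction that this paper itself uses in the proof of Corollary \ref{cor-local}: form $\scrK=\ker\bigl(\scrG\oplus\scrH_1\to (j_O)_*(\scrG\vert_O)\bigr)$ and compare $\scrG$ and $\scrH_1$ to $\scrK$, whose two projection maps have kernel and cokernel supported away from $O$ and hence are handled by the inductive hypothesis. With those two repairs the sketch closes.
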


We have the following corollary: if $Y$ is covered by $G$-stable Zariski open subsets, a compatible family of s-structures on the charts induces an s-structure on $Y$.  (Thus, s-structures form a sheaf in a very coarse topology on $Y$.)

\begin{corollary}
\label{cor-local}
Let $G$ be an affine algebraic group and let $Y$ be a scheme on which $G$ acts with finitely many orbits.  Suppose $Y = U \cup V$ where $U$ and $V$ are $G$-stable open subsets of $Y$.  Let $\bA_U$ be an s-structure on $U$ and let $\bA_V$ be an s-structure on $\bA_V$.  Then if $\bA_U$ and $\bA_V$ induce the same s-structure $\bA_{U \cap V}$ on $U \cap V$, there is a unique s-structure on $Y$ that induces $\bA_U$ and $\bA_V$.
\end{corollary}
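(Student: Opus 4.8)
The plan is to construct the s-structure on $Y$ through the gluing theorem \ref{achargluing}, by exhibiting a compatible family of s-structures on the $G$-orbits of $Y$. Since $Y = U \cup V$ with $U$ and $V$ open and $G$-stable, every orbit $O$ is contained in $U$, in $V$, or in both. If $O \subseteq U$ we let $A_O$ be the s-structure that $\bA_U$ induces on $O$ (restrict $\bA_U$ to a $G$-stable open of $U$ in which $O$ is closed, then apply the closed-subscheme construction), and if $O \subseteq V$ we define $A_O$ the same way from $\bA_V$. When $O \subseteq U \cap V$ the two recipes must be checked to agree: inducing $\bA_U$ down to $O$ factors through the s-structure $\bA_{U\cap V}$ it induces on $U \cap V$, which by hypothesis is also what $\bA_V$ induces, so the two end results coincide. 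This compatibility of the induction operations is a routine verification from the definitions. We thereby obtain a well-defined orbit family $\{A_O\}$ on $Y$.

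I would then verify hypotheses (F1) and (F2) of theorem \ref{achargluing} for this family. Condition (F1) is local near each orbit: for $O \subseteq U$ the sheaf $i_O^*\scrI_O$ is computed in a neighborhood of $O$ lying inside $U$, and since restriction to an open subscheme preserves reducedness and commutes with taking closures, the condition $(i_O^*\scrI_O)\vert_O \in {^{A_O}\!}\scrC_G(O)_{\leq 0}$ is exactly (F1) for $\bA_U$. The latter holds because $\bA_U$ is an s-structure on $U$, granting the standard fact --- a consequence of theorem \ref{achargluing}, as noted in \cite{pramod} --- that the restrictions of an s-structure to the orbits satisfy (F1) and (F2) and recover it through the displayed formula, so that membership in ``$\leq w$'' is detectable orbit by orbit. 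For (F2), take $O \subseteq U$ and $\scrF \in {^{A_O}\!}\scrC_G(O)_{\leq w}$. By construction $\scrF$ extends to some $\scrG_U \in {^{\bA_U}\!}\scrC_G(U)_{\leq w}$; then $\scrG_U\vert_{U \cap V}$ lies in ${^{\bA_{U\cap V}}\!}\scrC_G(U\cap V)_{\leq w}$, hence extends to some $\scrG_V \in {^{\bA_V}\!}\scrC_G(V)_{\leq w}$, and gluing $\scrG_U$ to $\scrG_V$ along an isomorphism over $U \cap V$ produces a $G$-equivariant coherent sheaf $\scrG$ on $Y$ with $i_O^*\scrG \cong \scrF$. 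I claim $\scrG\vert_{\overline{O}}$ is the desired extension: any smaller orbit $O' \subseteq \overline{O}$ lies in $U$ or in $V$, and $i_{O'}^*(\scrG\vert_{\overline{O}})$ is then the restriction to $O'$ of $\scrG_U$ or of $\scrG_V$, which lies in ${^{A_{O'}}\!}\scrC_G(O')_{\leq w}$ by the orbit-by-orbit detection just invoked.

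Given (F1) and (F2), theorem \ref{achargluing} produces a unique s-structure $\bA$ on $Y$, with $\spA\scrC_{\leq w} = \{\scrF \mid i_O^*\scrF \in {^{A_O}\!}\scrC_G(O)_{\leq w}\text{ for all }O\}$, inducing $A_O$ on each orbit. Two points remain. First, $\bA$ induces $\bA_U$ and $\bA_V$ (not merely the $A_O$): the only nontrivial inclusion asserts that an $\scrF$ on $U$ with $i_O^*\scrF \in {^{A_O}\!}\scrC_G(O)_{\leq w}$ for all $O \subseteq U$ extends to an object of $\spA\scrC_{\leq w}$ on $Y$, which follows from the same patching trick --- extend $\scrF\vert_{U\cap V}$ over $V$ using $\bA_V$ and glue. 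Second, uniqueness: any s-structure on $Y$ inducing $\bA_U$ and $\bA_V$ induces, by transitivity of induction, each $A_O$, hence equals $\bA$ by the uniqueness clause of theorem \ref{achargluing}.

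The main obstacle will be the bookkeeping in the (F2) verification and in the ``$\bA$ induces $\bA_U$'' argument: one must track which orbit sits in which chart, confirm that the open and closed induction operations compose as expected, and check that the sheaves really glue as $G$-equivariant coherent sheaves. A secondary point to handle with care is the reliance on the fact that, over a base with finitely many orbits, an s-structure is pinned down by its restrictions to the orbits --- itself a corollary of theorem \ref{achargluing}.
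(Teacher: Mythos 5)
Your proof is correct and the overall strategy (apply theorem \ref{achargluing} by producing an orbit family, checking (F1), and verifying (F2) by extending over $U$ and $V$ and gluing) is the same as the paper's. But the key step — producing compatible extensions on $U$ and on $V$ that can be glued — is done differently, and your version is cleaner. The paper takes two \emph{independent} extensions $\scrF_1 \in {^{\bA_U}\!}\scrC_G(U)_{\leq w}$ and $\scrF_2 \in {^{\bA_V}\!}\scrC_G(V)_{\leq w}$ of the sheaf $\scrF$ on $O$, and since $\scrF_1\vert_{U\cap V}$ and $\scrF_2\vert_{U\cap V}$ need not be isomorphic, it passes to the kernel $\scrG$ of the difference map $\scrF_1\vert_{U\cap V} \oplus \scrF_2\vert_{U\cap V} \to j_{O*}\scrF$, observes $\scrG\vert_O \cong \scrF$ and $\scrG \in {^{\bA_{U\cap V}}\!}\scrC_G(U\cap V)_{\leq w}$, and then re-extends $\scrG$ to both charts before gluing. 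You instead extend only once (to $\scrG_U$ on $U$), restrict to $U\cap V$, and use the \emph{definition} of the induced open-subscheme s-structure together with the hypothesis $\bA_{U\cap V} = \bA_U|_{U\cap V} = \bA_V|_{U\cap V}$ to extend $\scrG_U\vert_{U\cap V}$ to $\scrG_V$ on $V$, so that the two sheaves agree over $U\cap V$ by construction and glue directly. This avoids the kernel construction entirely and is a genuine simplification. Your explicit treatment of the final two points (that $\bA$ actually induces $\bA_U$ and $\bA_V$, and uniqueness) is slightly more than the paper records; it could also be done in one line by noting that both $\bA_U$ and the s-structure induced from $\bA$ on $U$ induce the same $A_O$ on each orbit, so they coincide by the uniqueness clause of theorem \ref{achargluing}, without needing a second patching argument.
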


\begin{proof}
The s-structures $\bA_U$ and $\bA_V$, since they agree over $U \cap V$, determine a unique s-structure $A_O$ on each orbit $O$.  The $A_O$ clearly satisfy (F1), and they satisfy (F2) on $U$ and $V$ separately.  Let us show they satisfy (F2) on all of $Y$.  

Let $O$ be an orbit and let $\scrF$ belong to ${^{A_O}\!}\scrC_G(O)_{\leq w}$.  If $\overline{O}$ is contained in either $U$ or $V$, then we may find the required extension by appealing to (F2) in $U$ or $V$.  Otherwise, we have that $\overline{O} \cap U$ and $\overline{O} \cap V$ are nonempty, which implies that $O \subset U \cap V$.  Let $\scrF_1$ be an extension of $\scrF$ to ${^{\bA_U}\!}\scrC_G(U)_{\leq w}$ and let $\scrF_2$ be an extension of $\scrF$ to ${^{\bA_V}\!}\scrC_G(V)_{\leq w}$.  Let $\scrG$ be the kernel of the difference map
$$\scrF_1 \vert_{U \cap V} \oplus \scrF_2 \vert_{U \cap V} \to j_{O*} \scrF$$
where $j_O$ denotes the inclusion of $O$ into $U \cap V$.  Note that the restriction of $\scrG$ to $O$ is naturally isomorphic to $\scrF$ -- in fact to the diagonal copy of $\scrF$ in $\scrF_1 \vert_O \oplus \scrF_2 \vert_O \cong \scrF \oplus \scrF$.  Since $\scrG$ is a subsheaf of the coherent sheaf $\scrF_1 \vert_{U \cap V} \oplus \scrF_2 \vert_{U \cap V}$, it is coherent, and since $\scrF_1 \vert_{U \cap V}$ and $\scrF_2 \vert_{U \cap V}$ belong to the Serre subcategory ${^{\bA_{U \cap V}}\!}\scrC_G(U \cap V)_{\leq w}$, $\scrG$ belongs to ${^{\bA_{U \cap V}}\!}\scrC_G(U \cap V)_{\leq w}$ as well.

By the definition of induced s-structures $\scrG$ admits extensions $\scrG_1$ to ${^{\bA_U}\!}\scrC_G(U)_{\leq w}$ and $\scrG_2$ to $^{\bA_V}\!\scrC_G(V)_{\leq w}$.  We may glue $\scrG_1$ and $\scrG_2$ along the isomorphism $\scrG_1 \vert_{U \cap V} \cong \scrG \cong \scrG_2 \vert_{U \cap V}$ to obtain a sheaf $\scrG_3$ on $Y$.  Then for each orbit $O'$, $i_{O'}^* \scrG_3$ is isomorphic either to $i_{O'}^* \scrG_1$ or $i_{O'}^* \scrG_2$ depending on whether $O'$ belongs to $U$ or to $V$; in particular $i_{O'}^*$ belongs to ${^{A_{O'}}\!}\scrC_G(O')_{\leq w}$, as desired. 
\end{proof}

\subsection{Step and altitude}
\label{stepalt}

The \emph{altitude} is an important numerical invariant of an s-structure.  It is an integer associated to each pair $(O,\bbD)$, where $\bbD$ is an equivariant Serre-Grothendieck dualizing complex on $Y$, and $O \subset Y$ is a $G$-orbit.  We will define the altitude as the \emph{step} of a certain line bundle restricted to $O$.

\begin{definition}
\label{def-step}
Let $\scrC$ be an abelian category and let $\{\scrC_{\leq w}\}$ be a Serre filtration of $\scrC$.  An object $c \in \scrC$ is called \emph{pure of step $w$} with respect to the Serre filtration if it belongs to $\scrC_{\leq w}$ and if it contains no subobjects which belong to $\scrC_{\leq w-1}$.  In this case we will write $\text{step }c = w$. 
\end{definition}

\begin{remark}
A simple object of $\scrC$ is always pure of some step; in particular, if $O$ is a $G$-orbit then any line bundle in $\scrC_G(O)$ is pure of some step.  By (S6) and (S8), this defines a homomorphism $\text{step}:\Pic_G(O) \to \bbZ$.  If $O$ is a $G$-orbit in a more general $G$-scheme $Y$ equipped with an s-structure, define a homomorphism $\text{step}_O:\Pic_G(Y) \to \bbZ$ by $L \mapsto \step(i_O^* L)$.
\end{remark}

The dualizing complexes form a torsor for $\Pic_G(Y) \times \bbZ$ by tensoring with line bundles and shifting.  The altitude is invariant under shifts in general, and we find it convenient to further normalize so that it is a function on $\Pic_G(Y)$, by picking a distinguished dualizing complex $K_Y$.

When $Y$ is smooth, let $K_Y = \Omega^{\mathrm{dim}(Y)}[\mathrm{dim}(Y)]$ denote the canonical bundle of $Y$, placed in cohomological degree $-\mathrm{dim}(Y)$ and endowed with its natural $G$-equivariant structure.  It has the property that $\bR i_Z^!(K_Y) = K_Z$ for any smooth $G$-stable subvariety $Z$ of $Y$.  For a general $Y$ let $K_Y \in D^b_G(Y)$ denote, if it exists, the equivariant Serre-Grothendieck complex with the property that $\bR i_Z^! K_Y \cong K_Z$ when $Z$ is a smooth $G$-stable subscheme of $Y$.  Such a $K_Y$ can be found, for instance, if $Y$ may be covered by $G$-stable affine open charts.  (When $Y$ is a toric variety, $K_Y$ is given explicitly in \cite{ishida}.)

\begin{warning}
Even for smooth $Y$, the sheaf $K_Y$ is not an invariant  of the quotient stack $[Y/G]$.  That is, 
if $[Y_1/G_1]$ and $[Y_2/G_2]$ are equivalent as algebraic stacks, $K_{Y_1}$ and $K_{Y_2}$ may not coincide under the identification $\scrC_{G_1}(Y_1) \cong \scrC_{G_2}(Y_2)$.
\end{warning}

\begin{definition}
\label{alt}
Let $G$ be an affine algebraic group and let $Y$ be a scheme on which $G$ acts with finitely many orbits.  Let $K_Y$ be as above, let $L$ be an equivariant line bundle on $Y$, and let $O$ be a $G$-orbit of $Y$.  Let us define the \emph{altitude} $\alt(O,L)$ of $L$ at $O$ to be $\text{step}_O (K_O \otimes L) = \text{step}_O(K_O) + \text{step}_O(L)$.
\end{definition}

It is easy to see that the altitude of $Y$ in the sense of \cite[Definition 6.3]{pramod} is equal to $\alt(O;L)$ when $O$ is the open orbit and we take the dualizing complex $\omega_Y$ to be $K_Y \otimes L$.  Similarly, the altitude of a more general orbit closure $\overline{O'}$ with its induced s-structure, and with $\omega_{\overline{O'}} = \bR i^! (\omega_Y)$, is $\alt(O';L)$ in light of the general formula $\bR i^!(K_Y \otimes L) \cong K_{\overline O} \otimes i^* L$ which is valid when $L$ is a line bundle.

\section{$\mathrm{s}$-structures on toric varieties}

In this section we will apply Achar's gluing theorem for s-structures (\ref{achargluing}) to toric varieties.  Our main result is theorem \ref{thm-stor}, which provides a bijection between s-structures on toric varieties and combinatorial data on the fan associated to $Y$.

\subsection{Toric varieties}
\label{sec-torvar}

To fix our notation let us review the connection between toric varieties and fans.
Let $T \cong \bbG_m^n$ be an $n$-dimensional algebraic torus, split over the base $\bbF$.  Let $X_*(T) = \Hom(\bbG_m, T)$ and $X^*(T) = \Hom(T,\bbG_m)$ denote the dual lattices of cocharacters and characters, respectively.  
We will mostly use additive notation for the group law in these lattices, and when multiplicative notation is more convenient (for instance when regarding a character as an actual function on $T$ or some compactification of $T$) we will use the notation $e^\lambda$.  Write $\langle, \rangle$ for the natural pairing $X^*(T) \otimes X_*(T) \to \bbZ$.

A \emph{$T$-toric variety} $Y$ is a reduced, normal, $n$-dimensional $T$-variety, with the property that $T$ acts simply transitively on a dense open subset of $Y$.  Let us pick a base-point $y_0$ in this open orbit.  For a $T$-orbit $O$ let $\st(O) \subset Y$ be the union of orbits $O'$ with $O \subset \overline{O'}$; $\st(O)$ is a $T$-stable open affine subvariety of $Y$.  To each $O$ we may associate the following cone in $X_*(T)$:
$$C(O) = \{ \gamma \in X_*(T) \mid \lim_{u \to 0} e^\gamma(u) \cdot y_0 \in \st(O)\}$$
$C(O)$ is a strongly convex rational polyhedral cone in $X_*(T)$.  That is, it is the intersection of finitely many rational half-spaces in $X_*(T)$ and it contains no line through the origin.  It does not depend on the base point.

The cones $C(O)$ form a \emph{fan} in $X_*(T)$, which we denote by $\Sigma(Y)$.  That is, the face of any cone of the form $C(O)$ is of the form $C(O')$, and the intersection of two cones $C(O_1) \cap C(O_2)$ is a face of both $C(O_1)$ and $C(O_2)$.

$C(O)$ has a dual cone $C(O)^\vee \subset X^*(T)$, which consists of those $\chi$ such that $\langle\chi,\gamma\rangle \geq 0$ for all $\gamma \in C(O)$.  $C(O)^\vee$ is a convex rational polyhedral cone, but it is not always strongly convex (for instance when $O$ is the open orbit $C(O)^\vee$ is all of  $X^*(T)$).  An equivalent description of $C(O)^\vee$ is
$$\{\chi \mid e^\chi \text{ extends to a regular function on }\st(O)\}$$
where we regard characters of $T$ as rational functions on $Y$ using the base-point $y_0$.  Thus we may identify the ring of functions on $\st(O)$ with $\bbF[C(O)^\vee]$, the semigroup ring of $C(O)^\vee$.  

Given a cone $C$ in $\Lambda \cong \bbZ^n$, let $\dim(C)$ denote the rank of the sublattice spanned by $C$, and set $\cod(C) = n - \dim(C)$.  We have $\dim(O) = \cod(C(O))$.

Finally, the following notation will be useful.  If $V$ is a representation of $T$, let us write $\supp_T(V) \subset X^*(T)$ for the set of weights of $T$ on $V$.  Note that $\supp_T 0 = \emptyset$.  If $\scrF$ is a $T$-equivariant quasicoherent sheaf on an affine $T$-variety, define $\supp_T(\scrF) := \supp_T H^0(Y;\scrF)$.    
If $K$ is a complex of $T$-modules or $T$-equivariant sheaves, set $\supp_T K = \bigcup_i \supp_T h^i(K)$.

\subsection{s-structures on torus orbits}

Let $T$ be a torus, and suppose that $T$ acts transitively on a variety $O$.  Let $T_O \subset T$ denote the isotropy subgroup for the $T$-action.  An s-structure on $O$ determines a homomorphism $\step:\Pic_T(O) \to \bbZ$ (definition \ref{def-step}), and since $\Pic_T(O) \cong X^*(T_O)$ we can identify this homomorphism with an element of $X_*(T_O) \subset X_*(T)$.
Conversely, for each $A \in X_*(T_O)$, define a Serre filtration of $\scrC_T(O)$
by
$${^A\!}\scrC_{\leq w} = \{\scrF \mid \langle \chi,A \rangle \leq w \text{ for all }\chi \in \supp_T(\scrF)\}$$

\begin{proposition}
\label{Torbit}
The Serre filtration associated to each $A \in X_*(T_O)$ is an s-structure, and the assignments $X_*(T_O) \leftrightarrow \{\text{s-structures on $O$}\}$ described above are inverse bijections.
\end{proposition}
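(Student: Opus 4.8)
The plan is to reduce the whole statement to elementary facts about graded vector spaces. Since $T$ acts transitively we have $O \cong T/T_O$, an affine variety, and descent along $T \to T/T_O$ identifies $\scrC_T(O)$ with the category $\mathrm{Rep}(T_O)$ of finite-dimensional representations of $T_O$. As $T_O$ is a closed subgroup of a split torus it is diagonalizable, so $\mathrm{Rep}(T_O)$ is the \emph{semisimple} category of finite-dimensional $X^*(T_O)$-graded $\bbF$-vector spaces; its simple objects are the one-dimensional ones, and under $\Pic_T(O) \cong X^*(T_O)$ these are exactly the equivariant line bundles, say $L_\mu$ for $\mu \in X^*(T_O)$. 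Under this equivalence $\otimes_{\scrO_O}$ becomes the graded tensor product over $\bbF$, and for $\scrF$ corresponding to a graded space $V$ one has $\supp_T(\scrF) = \supp_{T_O}(V) + X^*(T/T_O)$. The key point is that any $A \in X_*(T_O) = \Hom(X^*(T_O),\bbZ)$ pairs to zero with $X^*(T/T_O)$, so the condition defining ${}^A\scrC_{\leq w}$ sees only the finite set $\supp_{T_O}(V)$: it says $\langle \mu, A\rangle \leq w$ for every $\mu$ with $V_\mu \neq 0$.

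Granting this dictionary, checking that ${}^A\scrC_{\leq \bullet}$ is an s-structure is routine. It is an increasing chain of Serre subcategories essentially by inspection. For (S4), decompose $V = V' \oplus V''$ according to the sign of $\langle \mu, A\rangle - w$; then $V' \in {}^A\scrC_{\leq w}$, and the computation $\Hom(X,V'') = 0$ for $X \in {}^A\scrC_{\leq w}$ is immediate from the grading and also shows ${}^A\scrC_{\geq w} = \{\scrF : \langle\mu,A\rangle \geq w \text{ whenever } V_\mu \neq 0\}$. Axiom (S5) holds because a coherent sheaf has only finitely many weights. Axioms (S6) and (S8) follow from additivity of $\langle -, A\rangle$ together with $\supp_{T_O}(V \otimes W) \subseteq \supp_{T_O}(V) + \supp_{T_O}(W)$. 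Finally, $O$ is a single orbit, so $Y_0 = O$ and ${}^A\tilde\scrC_{\geq w} = {}^A\scrC_{\geq w}$, which is visibly a Serre subcategory of the semisimple category $\scrC_T(O)$; this is (S7).

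For the bijection I would argue in both directions. Starting from $A$: each $L_\mu$ is simple, hence pure, and $L_\mu \in {}^A\scrC_{\leq w}$ exactly when $\langle\mu,A\rangle \leq w$, so $\step(L_\mu) = \langle\mu,A\rangle$; thus the homomorphism $X^*(T_O) \to \bbZ$ recovered from the s-structure is, as an element of $X_*(T_O)$, the element $A$ itself. Conversely, let $\bA$ be an arbitrary s-structure on $O$ and put $A = \step_\bA \in X_*(T_O)$, a homomorphism by the remark following Definition \ref{def-step} (i.e. by (S6), (S8)). The only place the axioms are really used is to see that each simple $L_\mu$ is pure of a \emph{well-defined} step $w_\mu$: it lies in ${}^\bA\scrC_{\leq w}$ for $w \gg 0$ by (S5), and combining the definition of ${}^\bA\scrC_{\geq w}$ as the right orthogonal of ${}^\bA\scrC_{\leq w-1}$ with (S5) again one gets $\bigcap_w {}^\bA\scrC_{\leq w} = 0$, so $L_\mu \notin {}^\bA\scrC_{\leq w}$ for $w \ll 0$. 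Now ${}^\bA\scrC_{\leq w}$ is a Serre subcategory of a semisimple category, hence is determined by the set of simple objects it contains, which is $\{L_\mu : w_\mu \leq w\} = \{L_\mu : \langle \mu, A\rangle \leq w\}$; therefore ${}^\bA\scrC_{\leq w} = {}^A\scrC_{\leq w}$ for every $w$, as desired.

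The main obstacle is really only at the start: one must pin down the equivalence $\scrC_T(O) \cong \mathrm{grVect}_{X^*(T_O)}$ and the description of $\supp_T$ with the correct conventions, bearing in mind that the $T$-action on $O \cong T/T_O$ is not the translation action and that $\supp_T(\scrF)$ is genuinely infinite (a union of cosets of $X^*(T/T_O)$) until one pairs with an element of $X_*(T_O)$. Once that is done, every axiom and both halves of the bijection reduce to bookkeeping with finite sets of weights in a semisimple category.
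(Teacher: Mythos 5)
Your proposal is correct and follows essentially the same route as the paper: both exploit the semisimplicity of $\scrC_T(O)$ and the computation $\step(L_\mu)=\langle\mu,A\rangle$ to dispose of the axioms and the bijection. You are a bit more explicit than the paper about the dictionary with $X^*(T_O)$-graded vector spaces and about why a Serre subcategory of a semisimple category is determined by the simples it contains (the point the paper leaves implicit in its one-line justification that both composites are identities), but the underlying argument is the same.
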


\begin{proof}
To show that $\{{^A\!}\scrC_{\leq w}\}$ is an s-structure we have to verify conditions (S4)-(S8) of definition \ref{def-s-struct}.  Each of these verifications is trivial: (S4) and (S5) follow from the fact that $\scrC_T(O)$ is semisimple and each simple object belongs to ${^A\!}\scrC_{\leq w} \cap {^A\!}\scrC_{\geq w}$ for some $w$.  (S6) and (S8) follow from the fact that $\langle-, A\rangle$ is additive, and (S7) follows from the fact that the subcategories $\scrC_{\geq w} \subset \scrC_T(O)$ are Serre. 

Let $L$ be an equivariant line bundle on $O$, with weight $\chi \in X^*(T_O)$.  Then $L \in {^A\!}\scrC_{\leq {\langle \chi,A\rangle}}$ and 
$L \notin {^A\!}\scrC_{\leq {\langle\chi,A\rangle} -1}$ by definition.  Thus, $\step(L) = \langle \chi,A \rangle$. 
 This shows that the compositions $X_*(T_O) \to \{\text{s-structures on }O\} \to X_*(T_O)$ and 
 $\{\text{s-structures}\} \to X_*(T_O) \to \{\text{s-structures}\}$ are the identity maps.  This completes the proof. 
\end{proof}

\begin{remark}
It is true for a general group $G$ that an s-structure on a $G$-orbit is determined by its step homomorphism $\Pic_G(O) \to \bbZ$.  E.g. when the isotropy subgroup $H$ of $G$ is semisimple there are no nontrivial s-structures.  It can be shown from (S4) that a homomorphism $\Pic_G(O) \to \bbZ$ determines an s-structure if and only if it carries $\supp_T \scrO_H$ to nonnegative numbers, where $\scrO_H$ is the ring of regular functions on $H$ and $T$ acts on $\scrO_H$ by conjugation.  This is an observation of Achar's.
\end{remark}

\subsection{s-structures on fans}

We have seen that s-structures on $T$-orbits are given by cocharacters of $T$ which lie in the isotropy subgroup.  By theorem \ref{achargluing} an s-structure on a toric variety $Y$ will be given by attaching a cocharacter of $T$ to each orbit, satisfying certain conditions.  The following definition shows how these conditions can be expressed in terms of the fan associated to $Y$.

\begin{definition}
\label{sfan}
Let $\Lambda$ be a lattice and let $\Sigma$ be a fan in $\Lambda$.  An s-structure on $\Sigma$ is an assignment that associates to each cone $C\subset \Lambda$ of $\Sigma$ a distinguished element $A_C \in X_*(T)$, subject to the following conditions:
\begin{enumerate}
\item $A_C \in -C$ for all cones $C$.
\item Whenever $A_C = 0$ and $C' \subset C$ is a face, $A_{C'}= 0$ as well.
\end{enumerate}  
If $Y$ is a toric variety with fan $\Sigma$ and $\bA = \{A_C\}_{C \in \Sigma}$ is an s-structure on $\Sigma$, define a Serre filtration on $\scrC_T(Y)$ by
$$\spA\scrC_{\leq w} = \{\scrF \mid \langle \chi,A_C\rangle \leq w \text{ for all }\chi \in \supp_T i_C^* \scrF\}$$
Here $i_C: O \hookrightarrow Y$ is the inclusion of the reduced $T$-orbit corresponding to $C$, and $T_C \subset T$ is the isotropy subgroup of $O$.
\end{definition}

It is convenient to discuss the s-truncation functors $\sigma_{\leq w}$ associated to an s-structure on a fan before proving (theorem \ref{thm-stor}) that the Serre filtrations $\spA \scrC_{\leq w}$ actually form an s-structure on $\scrC_T(Y)$.  The functors $\sigma_{\leq w}$ behave somewhat differently for $w \geq 0$ than for $w < 0$: for negative $w$ the sheaf $\sigma_{\leq w}$ is always set-theoretically supported on a certain closed subset of $Y$ associated to the s-structure $\bA$.

\begin{definition}
\label{def-strunctor}
Let $Y = \Spec(\bbF[C^\vee])$ be an affine toric variety with torus $T$, and let $\Sigma$ be the fan in $X_*(T)$ associated to $Y$.  (So $\Sigma$ is the set of faces $D \subset C$).  Let $\bA = \{A_D\}_{D \in \Sigma}$ be an s-structure on $\Sigma$ and let $\spA \scrC_{\leq w}$ be the associated Serre filtration.
\begin{enumerate}
\item Let $Z \subset Y$ be the closed union of orbits in $Y$ corresponding to cones $C$ with $A_C \neq 0$.  Let $\hat{i}_Z^!:\scrC_T(Y) \to \scrC_T(Y)$ be the functor which takes a sheaf $\scrF$ to the maximal subsheaf set-theoretically supported on $Z$.
\item Define a functor $\sigma'_{\leq w}:\scrC_T(Y) \to \scrC_T(Y)$ by setting $\sigma_{\leq w} \scrF$ to be the maximal subsheaf of $\scrF$ with 
$$\langle \chi,A_D\rangle \leq w \text{ whenever } \chi \in \supp_T \scrF$$
\item Define $\sigma_{\leq w} := \sigma'_{\leq w}$ when $w \geq 0$, and $\sigma_{\leq w} = \sigma'_{\leq w} \circ \hat{i}_Z^!$ when $w < 0$.
\end{enumerate}
\end{definition}

Note that $H^0(\sigma'_{\leq w} \scrF)$ is the vector subspace of  $H^0(\scrF)$ spanned by the weight spaces $H^0(\sigma'_{\leq w} \scrF)_\chi$ with $\langle \chi,A_D\rangle \leq w$ for all $D$.  The condition $A_D \in -D$ implies that this vector subspace is actually a sub-$\bbF[C^\vee]$-module.  Using this fact it is easy to verify that $\sigma_{\leq w} \scrF$ always belongs to $\scrC_T(Y)_{\leq w}$.

\begin{theorem}
\label{thm-stor}
Let $Y$ be a toric variety with torus $T$, and let $\Sigma(Y)$ be the fan in $X_*(T)$ associated to $Y$.  Let $\bA = \{A_C\}_{C \in \Sigma(Y)}$ be an s-structure on $\Sigma(Y)$, and let $\{\spA \scrC_{\leq w}\}_{w \in \bbZ}$ be the Serre filtration on $\scrC_T(Y)$ of definition \ref{sfan}.
This Serre filtration is an s-structure on $Y$, and the assignment $\bA \mapsto \{\spA\scrC_{\leq w}\}$ is a bijection between s-structures on $\Sigma(Y)$ and s-structures on $Y$.
\end{theorem}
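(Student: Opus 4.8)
The plan is to reduce Theorem \ref{thm-stor} to the affine case, verify the hypotheses of Achar's gluing theorem \ref{achargluing} there, and then use Corollary \ref{cor-local} to globalize, extracting the bijection along the way.

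First I would treat the affine case $Y = \Spec(\bbF[C^\vee])$, where $\Sigma$ consists of the faces of $C$. By Proposition \ref{Torbit}, the datum $A_C$ on each orbit gives an s-structure on that orbit, and what must be checked are the compatibility conditions (F1) and (F2) of Theorem \ref{achargluing}. Condition (F1) asks that $(i_D^*\scrI_D)|_D$ lie in ${}^{A_D}\scrC_T(O_D)_{\leq 0}$; since the weights of $i_D^*\scrI_D$ are among the characters $\chi$ appearing in the maximal ideal functions near the orbit, one checks they pair nonnegatively against $-A_D$ (equivalently $\langle\chi, A_D\rangle \leq 0$) precisely because $A_D \in -D$ and these characters lie in the appropriate dual cone; this is where the condition $A_C \in -C$ is used. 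Condition (F2) is the genuine content: given $\scrF \in {}^{A_D}\scrC_T(O_D)_{\leq w}$, I must produce an extension to $\overline{O_D}$ restricting into ${}^{A_{D'}}\scrC_T(O_{D'})_{\leq w}$ for every face $D' \subset D$. Here I would use the s-truncation machinery of Definition \ref{def-strunctor}: extend $\scrF$ arbitrarily to a coherent sheaf on $\overline{O_D}$, then apply $\sigma_{\leq w}$ (or $\sigma'_{\leq w}$, noting the sign of $w$), using the remark after Definition \ref{def-strunctor} that the weight-space span is automatically a submodule because every $A_{D'} \in -D'$. The second condition on s-structures on fans — that $A_C = 0$ forces $A_{C'}=0$ on faces — is exactly what guarantees the $w=0$ and $w<0$ cases of $\sigma_{\leq w}$ glue consistently across the orbit stratification, so that the truncated extension has the required restrictions simultaneously for all $D'$.

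Once the affine case is established, I would globalize: a general toric variety $Y$ is covered by the affine charts $\st(O)$, and an s-structure $\bA$ on $\Sigma(Y)$ restricts to an s-structure on each chart's fan (the faces of the maximal cone), hence, by the affine case, to an honest s-structure on $\st(O)$. These agree on overlaps $\st(O) \cap \st(O')$ because the defining formula $\spA\scrC_{\leq w} = \{\scrF \mid \langle\chi,A_C\rangle \leq w,\ \chi \in \supp_T i_C^*\scrF\}$ depends only on the cones, not the chart. Applying Corollary \ref{cor-local} inductively over a finite affine cover produces a unique s-structure on $Y$, and unwinding the formula in Theorem \ref{achargluing} shows it is exactly $\{\spA\scrC_{\leq w}\}$. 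For the bijection: injectivity is clear since $A_C$ is recovered from $\step_{O_C}: \Pic_T(O_C) \to \bbZ$ via Proposition \ref{Torbit}; surjectivity follows because any s-structure on $Y$ restricts to s-structures on the orbits, giving cocharacters $A_C \in X_*(T_C)$, and the induced-on-closure axioms together with (F1) force $A_C \in -C$ and the face-vanishing condition, so $\{A_C\}$ is an s-structure on $\Sigma(Y)$.

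**The main obstacle** I anticipate is verifying condition (F2) of the gluing theorem in a way that is genuinely uniform across \emph{all} faces of $C$ at once, not just one orbit closure at a time. The subtlety is that the natural extension operation (truncating a presented coherent sheaf) must simultaneously land in the right filtered piece for every smaller orbit, and the interaction between the $w \geq 0$ and $w < 0$ branches of $\sigma_{\leq w}$ — the latter involving the set-theoretic support functor $\hat{i}_Z^!$ — is where the axiom ``$A_C = 0 \Rightarrow A_{C'} = 0$'' does real work; getting the bookkeeping of weights and cone memberships exactly right there is the technical heart of the argument. A secondary check that deserves care is that the globalized s-structure's filtered pieces really coincide with the combinatorial formula rather than merely being induced by it, which amounts to confirming that $i_C^*$ commutes appropriately with the restriction-to-chart functors.
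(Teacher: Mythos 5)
Your overall architecture matches the paper's: reduce to the affine case, verify (F1) and (F2) of Theorem \ref{achargluing} using Proposition \ref{Torbit} for the orbit s-structures, and globalize via Corollary \ref{cor-local}. But there are two places where the details you wave at are actually the content, and both concern the \emph{bijectivity} of $\bA \mapsto \{{^\bA\!}\scrC_{\leq w}\}$ rather than the mere existence of the s-structure.

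First, for (F1), you only argue the easy direction: that $A_D \in -D$ and $\supp_T(i_D^*\scrI_D) \subset C^\vee$ give $\langle\chi,A_D\rangle \leq 0$. For surjectivity you need the converse, that (F1) \emph{forces} $A_D \in -D$, and this is not automatic from ``$\langle\theta,A_D\rangle \leq 0$ for all $\theta \in \supp_T(i_D^*\scrI_D)$'' unless $\supp_T(i_D^*\scrI_D)$ actually generates $C^\vee$ as a monoid. The paper proves precisely this: after reducing to $O$ a point, the weights of $\scrI_C/\scrI_C^2$ are exactly the indecomposable elements of the strongly convex cone $C^\vee$, and these generate. You assert the conclusion in your surjectivity paragraph (``(F1) forces $A_C \in -C$'') without identifying this step.

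Second, for (F2), two things. Your sketch has the inclusion reversed: an extension of $\scrF$ to $\overline{O_D}$ must restrict well to orbits $O_{D'}$ with $D' \supset D$ (smaller orbits, bigger cones), not to faces $D' \subset D$. More importantly, ``extend arbitrarily to a coherent sheaf and truncate'' does not work: you must pick the extension so that $\sigma'_{\leq w}$ of it still restricts to $\scrF$ on $O_D$, and the paper does this by reducing to line bundles and taking the canonical extension $L(\xi)$, so that the generator of weight $\xi$ (with $\langle\xi,A_D\rangle \leq w$) survives the truncation. Your account of the role of the face-vanishing axiom (``$A_C = 0 \Rightarrow A_{C'} = 0$'') is also off: it is not about reconciling the $w\geq 0$ and $w<0$ branches of $\sigma_{\leq w}$ (the paper never invokes $\hat i_Z^!$ in this proof), but rather it guarantees, when $w<0$ and $\scrF \neq 0$, that $A_D \neq 0$ on the relevant orbits so that a nonzero extension can exist; and in the reverse direction it is produced from (F2) by a Nakayama-type argument (if $A_{C'}\neq 0$, take $0\neq\scrF\in\scrC_T(O')_{\leq -1}$, extend by (F2), restrict to $O$, and Nakayama gives $i_O^*\scrF_1\neq 0$, forcing $A_C\neq 0$), which you don't mention at all. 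Until these converse directions are supplied, the proposal establishes only that the map $\bA\mapsto\{{^\bA\!}\scrC_{\leq w}\}$ is well-defined, not that it is a bijection.
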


\begin{proof}
By proposition \ref{Torbit}, the $A_C$ define an s-structure on each orbit, and by theorem \ref{achargluing}, it suffices to show that the collections $\{A_C \in X_*(T_C)\}$ that satisfy conditions (F1) and (F2) are precisely those with $A_C \in -C$ for each $C$.  

Let us first show that condition (F1) of theorem \ref{achargluing} is equivalent to condition (1) of definition \ref{sfan}.  That is, let us show that $i_C^* \scrI_C \vert_C \in \scrC_{\leq 0}$ if and only if $A_C \in -C$, where $\scrI_C$ is the ideal sheaf of the orbit closure $\overline{O}$ corresponding to $C$.   We have $i_C^* \scrI_C \in \scrC_{\leq 0}$ if and only if  $\langle \theta, A_C\rangle \leq 0$ for each $\theta \in \supp_T(i_C^* \scrI_C)$, and we have to show that 
$$- C = \{A_C \in X_*(T) \mid \langle \theta,A_C \rangle \leq 0 \text{ for all }\theta \in \supp_T(i_C^* \scrI_C)\}$$
Since $-C = (-C)^{\vee \vee}$, it suffices to show that $\supp_T(i_C^* \scrI_C)$ spans $C^\vee$ in the sense that $\bbZ_{\geq 0} \cdot \supp_T(i_C^* \scrI_C) = C^\vee$.  

By restricting to a suitable $T$-stable open subset, we may assume that $Y = \Spec(\bbF[C^\vee])$ is affine and that $O$ is already closed in $Y$, and since such a toric variety is of the form $O \times Y'$ we may further reduce to the case where $O$ is a point.  Then $\scrI_C$ corresponds to the ideal $I_C \subset \bbF[C^\vee]$ of functions which vanish on $O$, and $i^* \scrI_C$ corresponds to the vector space $I_C/I_C^2$.  The ideal $I_C$ has a basis of characters given by nonzero elements of $C^\vee$, so its square $I_C^2$ has a basis of characters belonging to $C^\vee$ that can be written as a sum of two or more nonzero elements of $C^\vee$.  Thus, the characters in $\supp_T(i_C^* \scrI_C)$ are the indecomposable elements of $C^\vee$.  Since $O$ is a point, $C^\vee$ is strongly convex, and these indecomposable elements span $C^\vee$ as required.

Now assume that condition (F1) is satisfied, or equivalently (as we have just shown) that each $A_C$ lies in $-C$.  Then let us show that (F2) is equivalent to condition (2) of definition \ref{sfan}.  
First let us show that condition (F2) implies condition (2).  Let $C' \subset C$ be a pair of cones and $O' \supset O$ the corresponding pair of orbits.  Suppose that $A_{C'} \neq 0$.  Then there exists a nonzero object $\scrF \in \scrC_T(O')_{\leq -1}$.  If (F2) holds then $\scrF$ admits an extension $\scrF_1 \in \scrC_T(\overline{O}')_{\leq -1}$.  This implies by definition that $i_O^* \scrF_1 \in \scrC_T(O)_{\leq -1}$.  By Nakayama's lemma, $i_O^* \scrF_1$ must be nonzero.  This implies $A_C$ is not zero -- otherwise there would be no nonzero objects in $\scrC_T(O)_{\leq -1}$.

Now let us show the converse.  By corollary \ref{cor-local}, to show that $\bA$ is an s-structure we may assume that $Y$ is affine, say $Y = \Spec(\bbF[C^\vee])$.  Let $O$ be an orbit and let $D$ be the cone corresponding to $O$.  We have to show that any $T$-equivariant coherent sheaf $\scrF$ on $O$ belonging to ${^{A_D}\!}\scrC_T(O)_{\leq w}$ admits an extension to $\spA \scrC_T(\overline{O})_{\leq w}$.
We may assume that $\scrF$ is an equivariant line bundle (and in particular $\scrF \neq 0$, so that either $w \geq 0$ or $A_D \neq 0$).  Then $\scrF$ extends to a line bundle on $\overline{O}$: if $\xi \in X^*(T)$ is an element of $\supp_T \scrF$, and $L(\xi)$ is the line bundle on $Y$ whose $X^*(T)$-graded $\bbF[C^\vee]$-module of sections is freely generated by an element of weight $\xi$, then $L(\xi) \vert_O \cong \scrF$.  Set $\scrF_1 = \sigma'_{\leq w} L(\xi)$.  Since $\scrF \in {^{A_D}\!}\scrC_T(O)_{\leq w}$ we have $\langle \xi,A_D\rangle \leq w$, and it follows that $\scrF_1 \vert_O \cong \scrF$.
\end{proof}

\subsection{Altitude}
\label{sec-alttor}

Finally in this section, we compute the altitude function (see definition \ref{alt}) $\alt(O;-):\Pic_T(Y) \to \bbZ$ of an s-structure on a toric variety.  If $O$ is associated to a cone $C$ in the fan $\Sigma(Y)$, let us write $\alt(C;-) = \alt(O;-)$ and $\step_C = \step_O$.

First let us recall how to identify $\Pic_T(Y)$ with the group of piecewise-linear functions on the fan associated to $Y$.

Let $\uchi$ be a piecewise linear function on the fan $\Sigma$ associated to $Y$ -- i.e. a $\bbZ$-valued function on the union of cones $C \subset X_*(T)$ which is additive on each cone $C$.  
Note that $\uchi$ determines (indeed, is equivalent to) the data of, for each $C$, an element $\uchi_C \in X^*(T_C) \cong \Hom(X_*(T_C),\bbZ)$, subject to the condition that 
$$\uchi_C \vert_{X_*(T_{C'})} = \uchi_{C'}$$
whenever $C'$ is a face of $C$.  Here $T_C \subset T$ denotes the isotropy subgroup of the orbit corresponding to $C$.

Then up to isomorphism there is a unique $T$-equivariant line bundle $L(\uchi) \in \Pic_T(Y)$ such that, for each cone $C$, the character of $T_C$ on the restriction of $L(\uchi)$ to the orbit associated to $C$ is $\uchi_C$.  Moreover, every equivariant line bundle arises in this way.

\begin{proposition}
\label{alttor}
Let $T$ be a torus and let $Y$ be a $T$-toric variety with fan $\Sigma$ in $X_*(T)$.  Let $\bA = \{A_C \in -C\}_{C \in \Sigma}$ be an s-structure on $\Sigma$, and let $\uchi$ be a piecewise linear function on the  fan $\Sigma$.
\begin {enumerate}
\item For each cone $C$, we have $K_C \cong \scrO[\cod(C)]$.
\item The altitude of $L(\uchi)$ on the orbit associated to $C$ is given by
$$\alt(C,L) = \uchi(-A_C)$$
\end{enumerate}
\end{proposition}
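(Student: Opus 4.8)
The plan is to evaluate the two terms in the defining formula $\alt(C,L(\uchi)) = \step_C(K_C) + \step_C(L(\uchi))$ of Definition \ref{alt}, proving part (1) along the way. No reduction to the affine case is needed, since the altitude is computed orbit by orbit and $K_C$ is intrinsic to the orbit $O_C$.

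First I would prove part (1). The orbit $O_C$ is a homogeneous space $T/T_C$, and its isotropy group $T_C \subset T$ is a subtorus, so $O_C$ is again a split torus, of rank $\dim O_C = \cod(C)$. Being smooth, $O_C$ has dualizing complex $K_C = \Omega^{\cod(C)}_{O_C}[\cod(C)]$, and the translation-invariant top-degree form trivializes $\Omega^{\cod(C)}_{O_C}$; since that form is $T$-invariant, this is an isomorphism of $T$-equivariant sheaves, so $K_C \cong \scrO[\cod(C)]$ in $D^b_T(O_C)$. This is part (1). As step is insensitive to cohomological shifts and the step homomorphism $\Pic_T(O_C) \to \bbZ$ carries the trivial bundle to $0$ (Proposition \ref{Torbit}), we conclude $\step_C(K_C) = 0$.

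It remains to show $\step_C(L(\uchi)) = \uchi(-A_C)$. By Proposition \ref{Torbit}, the step of an equivariant line bundle on $O_C$ of weight $\mu \in X^*(T_C)$, with respect to the s-structure $A_C$, is $\langle \mu, A_C \rangle$. By the construction of $L(\uchi)$, the weight of $i_C^* L(\uchi)$ is $\uchi_C$, the linear functional on $X_*(T_C)$ through which $\uchi$ is given over the cone $C$; thus $\step_C(L(\uchi)) = \langle \uchi_C, A_C \rangle$. Since $A_C \in -C$, the element $-A_C$ lies in the cone $C$ where $\uchi$ is honestly linear, and unwinding the conventions one finds $\langle \uchi_C, A_C \rangle = \uchi(-A_C)$. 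Adding the two contributions gives $\alt(C,L(\uchi)) = 0 + \uchi(-A_C) = \uchi(-A_C)$, which is part (2).

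The geometric content — that the canonical complex of a torus orbit is equivariantly trivial — is immediate, so the real work is in the bookkeeping: one must use the dictionary between $\Pic_T$ and piecewise-linear functions, the convention for weights of $T$-equivariant sheaves, and the sign built into $A_C \in -C$ consistently, so that the evaluations of $\uchi$ land on the correct side of the origin and the answer emerges as $\uchi(-A_C)$ rather than its negative. I expect this sign-chase to be the only delicate point.
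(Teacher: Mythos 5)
Your approach is essentially the paper's: the paper also proves part (2) directly from the formula $\alt(C;L) = \step_C(K_C) + \step_C(L)$ of Definition~\ref{alt}, observing that the weight of $T_C$ on $K_C$ is zero (so $\step_C(K_C)=0$) and then invoking Proposition~\ref{Torbit} to evaluate $\step_C(L(\uchi))$. You also supply the short argument for part (1) (translation-invariant top form on the torus $O_C$ gives an equivariant trivialization of $\Omega^{\cod(C)}_{O_C}$), where the paper merely says ``trivial.''

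The one place where I'd push back is precisely the sign-chase you flag as the ``only delicate point,'' because as written your chain of equalities does not close. Proposition~\ref{Torbit} gives $\step(L) = \langle\chi,A\rangle$ when $L$ has weight $\chi$, so with the paper's stated convention that the weight of $i_C^*L(\uchi)$ is $\uchi_C$ you get $\step_C(L(\uchi)) = \langle\uchi_C,A_C\rangle$; but under the natural reading that $\uchi$ agrees with the linear functional $\uchi_C$ on the cone $C$ (and $-A_C \in C$), one has $\uchi(-A_C) = \langle\uchi_C,-A_C\rangle = -\langle\uchi_C,A_C\rangle$, which is the \emph{negative} of what you claim. To land on $\uchi(-A_C)$ one of the two conventions has to carry an extra sign --- e.g.\ the restriction of $L(\uchi)$ to $O_C$ should have weight $-\uchi_C$, as in the usual toric dictionary where the order-of-vanishing data and the support function differ by a sign. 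It is only fair to note that the paper's own proof writes $\step_C(L) = \langle\uchi_C,-A_C\rangle$, which likewise does not follow from Proposition~\ref{Torbit} plus the stated weight convention without an unremarked adjustment; so this is a wrinkle in the source's bookkeeping as much as in yours. Still, ``unwinding the conventions one finds'' is doing real work there, and you should pin down which of the two sign conventions you are adopting so that the chain $\step_C(L(\uchi)) = \uchi(-A_C)$ is an identity rather than an act of faith.
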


\begin{proof}
The first assertion is trivial.  To prove the second assertion, recall that the formula of definition \ref{alt} has $\alt(C;L) = \step_C(K_C) +\step_C(L)$.  The weight of $T_C$ on $K_C$ is zero, so by proposition \ref{Torbit}, the step of $L$ at the orbit associated to $C$ is $\langle \uchi_C,-A_C\rangle$.
\end{proof}

\section{The staggered t-structure}

Let $Y$ be a scheme on which $G$ acts with finitely many orbits.  Given an s-structure $\bA$ on $Y$ and a function $p:\{G\text{-orbits on }Y\} \to \bbZ$, we may define a pair of subcategories
$$
\begin{array}{c}
{^{\bA,p}\!}D^{\leq 0} \subset D^b_G(Y) \\
{^{\bA,p}\!}D^{\geq 0} \subset D^b_G(Y)
\end{array}
$$
by putting $\scrF \in \spAp D^{\leq 0}$ whenever $h^k(Li_O^*\scrF) \in \spA\scrC_{\leq p(O)-k}$, and defining $\spAp D^{\geq 0}$ to be the right orthogonal to $\spAp D^{\leq -1} := \spAp D^{\leq 0}[1]$.  One of the main results of \cite{pramod} is that under certain ``perversity'' conditions on $(\bA,p)$, the pair $(\spAp D^{\leq 0}, \spAp D^{\geq 0})$ is a t-structure in the sense of \cite{bbd}.

The perversity conditions are subject to an auxiliary choice -- that of a dualizing sheaf $\bbD$.  Let us set $\bbD = K_Y \otimes L$ for an equivariant line bundle $L$, as in section \ref{stepalt}.  

\begin{definition}
\label{defperv}
Let $G$ be an algebraic group and let $Y$ be a scheme on which $G$ acts with finitely many orbits.  Let $\bA$ be an s-structure on $Y$, and let $\bbD$ be a dualizing sheaf of the form $K_Y \otimes L$.  We will say that a $\bbZ$-valued function $p$ on the $G$-orbits of $Y$ is a \emph{perversity with respect to $L$} if for each pair of orbits $O,O'$ with $O' \subset \overline{O}$, we have
$$0 \leq p(O') - p(O) \leq (-\dim(O) + \alt(O;L)) - (-\dim(O')+ \alt(O';L))$$
In \cite{pramod}, the quantity $(-\dim(O)+\alt(O;L))$ is called the ``staggered codimension'' of $O$ (with respect to the dualizing complex $K_Y \otimes L$).  The lower (resp. upper) bound on $p(O') - p(O)$ is called the ``monotonicity'' (resp. ``comonotonicity'') condition.  We will say that $p$ is a \emph{perversity function} if it is a perversity function with respect to some line bundle $L$.
\end{definition}

\begin{remarks}
\label{dualperv}
Let $p$ be a perversity function with respect to $L \in \Pic_G(Y)$.  It is proved in \cite{pramod} that $\spAp D^{\geq 0}$ has a more concrete description.  Let $\overline{p}$ denote the function 
$$\overline{p}(O) = -\dim(O) + \alt(O;L) - p(O)$$
Note that $\overline{p}$ is also a perversity with respect to $L$.  Then
$$\spAp D^{\geq 0} = \{\scrF \mid \bbD \scrF := \bR \underline{\Hom} (\scrF,K_Y \otimes L)
\text{ is in }{^{\bA,\overline{p}}\!}D^{\leq 0}
\}$$
\end{remarks}

Now we can state the main result of \cite{pramod} (in the special case when $G$ acts with finitely many orbits).

\begin{theorem}[{\cite[Theorem 7.4]{pramod}}]
\label{thm74}
Let $G$ be an algebraic group and let $Y$ be a scheme on which $G$ acts with finitely many orbits.  Let $\bA$ be an s-structure on $Y$ and let $p$ be a perversity function.  Then $(\spAp D^{\leq 0},\spAp D^{\geq 0})$ is a bounded, nondegenerate t-structure on $D^b_G(Y)$.  Let us call it the \emph{$(\bA,p)$-staggered t-structure}.
\end{theorem}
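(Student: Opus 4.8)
The plan is a dévissage along the orbit stratification: first check that on a single orbit the recipe of the statement defines a t-structure, then glue over the finitely many orbits using the recollement formalism of \cite{bbd} in the coherent form developed in \cite{bezru}. The perversity hypotheses play no role in the single-orbit case; they are what make the gluing produce \emph{bounded} complexes. In the single-orbit case, let $O$ be an orbit, so $\bL i_O^* = \mathrm{id}$, and let $\{\spA\scrC_{\leq w}\}$ be the Serre filtration of $\scrC_G(O)$ coming from the s-structure. The candidate aisle is $\spAp D^{\leq 0} = \{\scrF \mid h^k(\scrF) \in \spA\scrC_{\leq p(O)-k}\ \text{for all }k\}$, with $\spAp D^{\geq 0}$ the shifted right orthogonal; closure under $[1]$ and the vanishing $\Hom(\spAp D^{\leq 0},\spAp D^{\geq 1}) = 0$ are immediate. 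The substance is the truncation triangle $\tau^{\leq 0}\scrF \to \scrF \to \tau^{\geq 1}\scrF$: I would build $\tau^{\leq 0}\scrF \hookrightarrow \scrF$ by interleaving ordinary cohomological truncation with the s-truncation functors $\sigma_{\leq w}$ supplied by axiom (S4) — cohomologically truncate, replace the lowest surviving cohomology sheaf by its $\sigma_{\leq w}$-subsheaf for the appropriate $w$, and iterate downward — and then verify that the third vertex of the resulting triangle lies in $\spAp D^{\geq 1}$ using left-exactness of $\sigma_{\leq w}$ together with (S4). This is the step where the somewhat opaque s-structure axioms earn their keep.

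For the gluing step, choose a closed union of orbits $i : Z \hookrightarrow Y$ with open complement $j : U \hookrightarrow Y$, both with strictly fewer orbits than $Y$; by \cite[Lemma 3.9]{pramod} the s-structure restricts to $U$ and $Z$, and since monotonicity and comonotonicity involve only pairs of orbits, $p$ restricts to a perversity on each, so by induction on the number of orbits the staggered t-structures on $D^b_G(U)$ and $D^b_G(Z)$ are in hand. Writing $i_O = j\circ i^U_O$ or $i_O = i\circ i^Z_O$ and unwinding, one sees
\[
\spAp D^{\leq 0} = \{\scrF \mid \bL j^*\scrF \in \spAp D^{\leq 0}_U,\ \bL i^*\scrF \in \spAp D^{\leq 0}_Z\},
\]
and, using the recollement adjunctions available on the relevant $D^{\pm}$ ($j^* \dashv \bR j_*$, $\bL i^* \dashv i_*$, $i_* \dashv \bR i^!$), that $\spAp D^{\geq 0} = \{\scrF \mid \bL j^*\scrF \in \spAp D^{\geq 0}_U,\ \bR i^!\scrF \in \spAp D^{\geq 0}_Z\}$; in particular this recovers the $\bR i_C^!$-description of the co-aisle used in the Main Theorem (equivalently, one may route through the duality identity of remarks \ref{dualperv}). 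The glued truncation triangle is then assembled from the recollement triangle $i_*\bR i^!\scrF \to \scrF \to \bR j_*\bL j^*\scrF$ and its companion $j_!j^*\scrF \to \scrF \to i_*\bL i^*\scrF$ by truncating the $U$- and $Z$-parts separately and splicing, exactly as in \cite{bbd,bezru}.

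The hard part is that in the coherent setting the six functors of a recollement are not all defined on $D^b$: $\bL j^*$ and $\bL i^*$ land a priori only in $D^-$, $\bR j_*$ and $\bR i^!$ only in $D^+$, and there is no coherent $j_!$ at all, so the BBD gluing cannot be quoted verbatim. Following \cite[\S3]{bezru}, one must instead prove that the glued subcategories consist of honestly bounded objects and that the spliced triangle stays in $D^b_G(Y)$. This is exactly where the perversity conditions are consumed: comonotonicity bounds how far $\bR i^!$ can raise cohomological degree, and monotonicity — transported via Serre--Grothendieck duality, which swaps the two roles through remarks \ref{dualperv} — does the same for $\bL i^*$; combined with axiom (S5) and the finiteness of the set of orbits, they yield $\bigcup_n \spAp D^{\leq n} = D^b_G(Y) = \bigcup_n \spAp D^{\geq -n}$, i.e. boundedness. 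Nondegeneracy is then formal: every object lies in some $\spAp D^{\leq m}\cap\spAp D^{\geq m'}$, so anything in $\bigcap_n \spAp D^{\leq -n}$ or in $\bigcap_n \spAp D^{\geq n}$ is orthogonal to itself after a shift and hence zero.
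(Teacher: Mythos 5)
The paper does not prove this statement: Theorem~\ref{thm74} is explicitly attributed to \cite[Theorem 7.4]{pramod} and is used by Treumann as a black box, so there is no in-paper argument to compare yours against. The work Treumann actually does lies elsewhere (theorems \ref{thm-stor} and \ref{thmpervtor}), translating Achar's notions into the language of fans; the t-structure theorem itself is input, not output.

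Judged on its own terms, your sketch has the correct overall shape --- stratified d\'evissage with the s-structure supplying the single-orbit case, then a coherent adaptation of BBD gluing --- but it defers precisely the steps where the content lives. Three concrete gaps. First, in the single-orbit case $\sigma_{\leq w}$ is only left exact, so ``replace the lowest surviving cohomology sheaf by its $\sigma_{\leq w}$-subsheaf'' does not automatically lift to a subobject of $\scrF$ in $D^b$; the verification that the cone lands in $\spAp D^{\geq 1}$ is exactly the place the s-structure axioms are consumed, and you only gesture at it. Note also that for general $G$ the orbit category $\scrC_G(O) \cong \mathrm{Rep}(H)$ need not be semisimple, so the torus shortcut is not available. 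Second, the description of $\spAp D^{\geq 0}$ via $\bR i^!$ and $\bR i_C^!$ is asserted to ``unwind'' from the adjunctions, but in the paper's conventions $\spAp D^{\geq 0}$ is \emph{defined} as the right orthogonal of $\spAp D^{\leq -1}$, and the $\bR i^!$/duality characterization (remark~\ref{dualperv}) is itself a nontrivial theorem of \cite{pramod}, not a formal consequence; invoking it inside a proof of Theorem~\ref{thm74} risks circularity. Third, framing (co)monotonicity as ``only what makes the gluing produce bounded complexes'' undersells them: because the coherent recollement lacks $j_!$ and its friends, one cannot quote BBD gluing and then bound afterward --- the perversity inequalities are what make the hand-built truncation triangle well-defined in $D^b$ at all, not merely what keep the pieces finite. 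None of these is a fatal misconception, but together they mean your proposal is a plausible outline rather than a proof, and in any case it addresses a statement the paper deliberately imports rather than proves.
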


\subsection{Staggered t-structures on toric varieties}

Now we will apply theorem \ref{thm74} to toric varieties.  First we express definition \ref{defperv} in terms 
of fans.

\begin{definition}
\label{pfan}
Let $\Lambda \cong \bbZ^n$ be a lattice and let $\Sigma$ be a fan in $\Lambda$.  Let $\bA = \{A_C \in -C\}_{C \in \Sigma}$ be an s-structure on $\Sigma$, and let $\uchi$ be a $\bbZ$-valued piecewise-linear function on the antipodal fan to $\Sigma$.  A $\bbZ$-valued function $p: C \mapsto p(C)$ on the set of cones in $\Sigma$ is called a \emph{perversity function} with respect to $\uchi$ if, whenever $C$ is a face of $C'$, 
$$0 \leq p(C') - p(C) \leq \mathrm{dim}(C') + \uchi(-A_{C'}) -\mathrm{dim}(C) - \uchi(-A_C)$$
Note that to verify $p$ is a perversity with respect to $\uchi$, it is enough to check the inequalities
$$0 \leq p(C') - p(C) \leq 1 + \uchi(-A_{C'}) - \uchi(-A_C)$$
when $C$ is a codimension one face of $C'$.
We will say that $p$ is a perversity function if it a perversity function with respect to some $\uchi$.
\end{definition}

\begin{remark}
Note that the existence of a perversity function implies some weak convexity conditions on the parameters $A_C$ defining an s-structure.  Namely, for $\bA$ to admit a perversity function there must exist a piecewise-linear function $\uchi$ satisfying
$$\uchi(-A_C') - \uchi(-A_C) +1 \geq 0$$
for all cones $C'$ and $C$ with $C$ a codimension one face of $C'$.
\end{remark}

If $\Lambda = X_*(T)$ and $\Sigma = \Sigma(Y)$ for some torus $T$ and some $T$-toric variety $Y$, it is clear that a perversity function on $\Sigma$, perverse with respect to $\uchi$, induces a perversity function on $Y$ perverse with respect to $L(\uchi)$.  (See section \ref{sec-alttor}.)

A straightforward translation of theorem \ref{thm74} into the language of fans reads

\begin{theorem}
\label{thmpervtor}

Let $T \cong \bbG_m^n$ be an $n$-dimensional torus, and let $Y$ be a $T$-toric variety with associated fan $\Sigma$ in $X_*(T)$.  Let $\bA$ be an s-structure and let $p$ be a perversity function.  For each $C \in \Sigma$ let $i_C$ denote the inclusion of the orbit associated to $C$ into $Y$.  Define full subcategories
$\spAp D^{\leq 0}$ and $\spAp D^{\geq 0}$ of $D^b_T(Y)$, by
$$\begin{array}{ccc}
\spAp D^{\leq 0} & =  & \{ \scrF \mid \forall C\, \langle \xi, A_C\rangle \leq p(C) - k \text{ whenever }\xi \in \supp_T h^k(\bL i_C^* \scrF)\} \\
\spAp D^{\geq 0} & = & \{ \scrF \mid \forall C \, \langle \xi, A_C \rangle \geq p(C) - k \text{ whenever } \xi \in \supp_T h^k(\bR i_C^! \scrF)\}
\end{array} 
$$
Then $(\spAp D^{\leq 0}, \spAp D^{\geq 0})$ is a bounded, nondegenerate t-structure on $D^b_T(Y)$.
\end{theorem}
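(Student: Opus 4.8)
The plan is to deduce Theorem~\ref{thmpervtor} directly from Achar's Theorem~\ref{thm74}, so the only real work is checking that the explicit description of the two subcategories given in the statement agrees with the intrinsic definitions $\spAp D^{\leq 0} = \{\scrF \mid h^k(\bL i_O^*\scrF) \in \spA\scrC_{\leq p(O)-k}\}$ and $\spAp D^{\geq 0} = (\spAp D^{\leq -1})^\perp$. First I would observe that by Theorem~\ref{thm-stor} the combinatorial data $\bA = \{A_C\}$ really is an s-structure on $Y$, and that by the remark following Proposition~\ref{alttor} a perversity function on $\Sigma$ in the sense of Definition~\ref{pfan} (perverse with respect to $\uchi$) is exactly a perversity function on $Y$ in the sense of Definition~\ref{defperv} (perverse with respect to $L(\uchi)$), since $\alt(C;L(\uchi)) = \uchi(-A_C)$ and $\dim(O) = \cod(C)$. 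So Theorem~\ref{thm74} applies and produces a bounded nondegenerate t-structure; it remains only to match the subcategories.

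The identification of $\spAp D^{\leq 0}$ is immediate: for the orbit $O$ attached to a cone $C$ we have the induced s-structure ${^{A_C}\!}\scrC_T(O)_{\leq w} = \{\scrG \mid \langle \xi, A_C\rangle \leq w \text{ for all } \xi \in \supp_T \scrG\}$ by Proposition~\ref{Torbit} and Definition~\ref{sfan}, so $h^k(\bL i_C^*\scrF) \in \spA\scrC_{\leq p(C)-k}$ says precisely that $\langle \xi, A_C\rangle \leq p(C)-k$ for every weight $\xi$ of $h^k(\bL i_C^*\scrF)$. For the coaisle I would use the dual description from Remarks~\ref{dualperv}: writing $\overline{p}(C) = \cod(C) + \uchi(-A_C) - p(C)$, which is again a perversity with respect to $L(\uchi)$, we have $\scrF \in \spAp D^{\geq 0}$ iff $\bbD\scrF := \bR\underline{\Hom}(\scrF, K_Y \otimes L(\uchi)) \in {^{\bA,\overline p}\!}D^{\leq 0}$, i.e.\ iff $\langle \eta, A_C\rangle \leq \overline{p}(C) - k$ for every weight $\eta$ of $h^k(\bL i_C^* \bbD\scrF)$. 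The task is then to convert this into the stated condition on the weights of $h^k(\bR i_C^!\scrF)$.

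The conversion is a local duality computation on each chart $\st(O) = \Spec(\bbF[C^\vee])$. Here the point is the formula $\bR i_C^!(K_Y \otimes L) \cong K_{\overline O} \otimes i_C^* L$ for a line bundle $L$, recorded at the end of section~\ref{stepalt}, together with $K_{\overline O} \cong \scrO[\cod(C)]$ restricted to the closed orbit $O$ (Proposition~\ref{alttor}(1)); combined with the interchange of $\bL i_C^*$ and $\bR i_C^!$ under $\bR\underline{\Hom}(-, K)$ on a smooth chart, one gets $\bL i_C^*\, \bbD\scrF \cong \bR\underline{\Hom}\bigl(\bR i_C^!\scrF,\, K_O \otimes i_C^* L(\uchi)\bigr)$ in $D_T^-(O)$, up to the canonical identification. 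On the orbit $O$, which is a torsor under $T/T_C$, Serre--Grothendieck duality is elementary: it takes a weight space of $h^k$ to a weight space of $h^{-k-\cod(C)}$ of the dual, negating the $T_C$-weight and shifting by the weight $\uchi_C$ of $i_C^* L(\uchi)$. Feeding $\eta = -\xi + \uchi_C$ and the degree shift into $\langle \eta, A_C\rangle \leq \overline p(C) - k$ and unwinding $\langle \uchi_C, A_C\rangle = \uchi(A_C) = -\uchi(-A_C)$ yields exactly $\langle \xi, A_C\rangle \geq p(C) - k$ for the weights $\xi$ of $h^k(\bR i_C^!\scrF)$, which is the asserted description of $\spAp D^{\geq 0}$.

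I expect the main obstacle to be bookkeeping rather than conceptual: keeping the cohomological degree shifts, the sign of the weight under duality, and the twist by $\uchi_C$ all consistent, and being careful that $\bR i_C^!$ here is the composite through the affine chart $\st(O)$ (on which $O$ is closed) so that the local-duality statement applies verbatim, matching the conventions set in the Notation subsection. One should also check that the subcategory defined by the weight condition on $\bR i_C^!\scrF$ is manifestly closed under the operations it needs to be — but once the identification with Achar's $\spAp D^{\geq 0}$ is in hand, boundedness and nondegeneracy are inherited from Theorem~\ref{thm74} and require no further argument.
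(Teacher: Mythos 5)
Your proposal takes exactly the route the paper does: deduce the theorem from Achar's Theorem~\ref{thm74}, identify the aisle $\spAp D^{\leq 0}$ directly from Proposition~\ref{Torbit} and Definition~\ref{sfan}, and identify the coaisle via the dual description of Remarks~\ref{dualperv} by working out what Grothendieck duality does to weights on each orbit, using $\bR i_C^!(K_Y \otimes L) \cong K_{\overline{O}} \otimes i_C^* L$, $K_O \cong \scrO[\cod(C)]$, and semisimplicity of $\scrC_T(O)$ to compute $\supp_T h^k(\bL i_C^* \bbD\scrF)$ in terms of $\supp_T h^{-\cod(C)-k}(\bR i_C^!\scrF)$. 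So the strategy is correct and agrees with the paper.

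However, the bookkeeping you flag as the main danger is in fact not carried through correctly. You set $\overline{p}(C) = \cod(C) + \uchi(-A_C) - p(C)$, but Remarks~\ref{dualperv} give $\overline{p}(O) = -\dim(O) + \alt(O;L) - p(O)$ and $\dim(O) = \cod(C)$, so it should be $\overline{p}(C) = -\cod(C) + \uchi(-A_C) - p(C)$. More seriously, with your (correct-looking) evaluation $\langle\uchi_C, A_C\rangle = -\uchi(-A_C)$ and the substitution $\eta = \uchi_C - \xi$, $k = -\cod(C) - \ell$, the condition $\langle\eta, A_C\rangle \leq \overline{p}(C) - k$ unwinds to $\langle\xi, A_C\rangle \geq p(C) - \ell - 2\uchi(-A_C)$, not $p(C) - \ell$; the paper's computation closes up only if one instead uses $\langle\uchi_C, A_C\rangle = +\uchi(-A_C)$, i.e.\ if the $T_C$-weight of $i_C^*L(\uchi)$ is taken to be $-\uchi_C$ rather than the $\uchi_C$ stated in section~\ref{sec-alttor}. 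This is a genuine sign-convention ambiguity which your sketch inherits; to make your argument airtight you need to fix, once and for all, whether $\step_O(L(\uchi))$ equals $\langle\uchi_C, A_C\rangle$ or $\langle\uchi_C, -A_C\rangle$ (Proposition~\ref{Torbit} versus the wording in the proof of Proposition~\ref{alttor}) and make the definition of $L(\uchi)$ consistent with that choice; once you do, the duality computation does give exactly the claimed description of $\spAp D^{\geq 0}$, and boundedness and nondegeneracy follow from Theorem~\ref{thm74} as you say.
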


We will denote the heart of this t-structure by $\spAp \scrM_T(Y)$, and call it the category of \emph{$(\bA,p)$-staggered sheaves} on $Y$.

\begin{proof}

Suppose that $p$ is a perversity with respect to $\uchi$, and let $\overline{p}$ be the dual perversity given by 
$$\overline{p}:C \mapsto -\cod(C)+ \uchi(-A_C) - p(C)$$
After theorem \ref{thm74} and remark \ref{dualperv}, we only have to show that the subcategory $\spAp D^{\geq 0}$ defined in the statement of the theorem coincides with
$$\{
\scrF \mid \bbD_Y \scrF \text{ is in }{^{\bA,\overline{p}}\!}D^{\leq 0}
\}
$$
Here $\bbD_Y$ denotes the dualizing functor $\bR\underline{\Hom}(-,K_Y \otimes L(\uchi))$.  
But we have
$$\begin{array}{rl}
\supp_T \bigg( h^k \bL i_C^* \bbD \scrF\bigg) & = \supp_T \bigg( h^k \bR\underline{\Hom}(\bR i_C^! \scrF,L(\uchi_C)[\cod(C)]) \bigg) \\
& = \supp_T \bigg(h^{k + \cod(C)}\bR\underline{\Hom}(\bR i_C^! \scrF,L(\uchi_C))\bigg) \\
& = \supp_T\bigg(\Hom(h^{-\cod(C)-k} \bR i_C^! \scrF,L(\uchi_C))\bigg) \\
& = \uchi_C - \supp_T \bigg(h^{-\cod(C)-k} \bR i_C^! \scrF\bigg)
\end{array}
$$

Now, by definition, $\bbD \scrF \in {^{\bA,\overline{p}}\!}D^{\leq 0}$ if and only if 
$$\langle \xi,A_C \rangle \leq \overline{p}(C) - k \text{ whenever }\xi \in \supp_T h^k \bL i_C^* \bbD\scrF$$
which, after the equations above, can be rewritten as
$$\langle \theta,A_C\rangle \geq p(C) - k \text{ whenever } \theta \in \supp_T h^k \bR i_C^! \scrF$$
proving the theorem.
\end{proof}

\begin{remark}

I do not know the most general hypotheses on the function $p$ that guarantees that $(\spAp D^{\leq 0}, \spAp D^{\geq 0})$ is a t-structure, but the condition ``$p$ is a perversity function in the sense of definition \ref{pfan}'' is only sufficient, not necessary.  Tensoring the subcategories associated to $(\bA,p)$ with an equivariant line bundle gives the subcategories associated to $(\bA,C \mapsto p(C) + \uchi(-A_C))$ for a piecewise linear function $\uchi: X_*(T) \to \bbZ$.  It is possible for $p$ to be a perversity function but for $C \mapsto p(C) + \uchi(-A_C)$ not to be one.
\end{remark}

\begin{corollary}
\label{cor-dual}
Let $T \cong \bbG_m^n$, $Y$,  and $\bA$ be as in theorem \ref{thmpervtor}.  Let $L(\uchi)$ be an equivariant line bundle on $Y$.  Suppose that the quantity
$\dim(C) + \uchi(-A_C)$
is even for all $C$.  Suppose furthermore that, for all cones $C,C'$ with $C$ a codimension one face of $C'$, we have
$$\uchi(-A_{C'}) - \uchi(-A_C) + 1 \geq 0$$
Then the function
$$p(C) := \frac{1}{2}(\dim(C) + \uchi(-A_C))$$
is a perversity with respect to $\uchi$, and the $(\bA,p)$-staggered t-structure is self-dual with respect to the dualizing functor defined by
$$\bbD \scrF := \bR \underline{\Hom}(-,K_Y \otimes L(\uchi)[n])$$

\end{corollary}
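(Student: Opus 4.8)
The plan is to check the two hypotheses of the corollary against the conditions appearing in definition \ref{pfan} and in remarks \ref{dualperv}, and then to invoke theorem \ref{thmpervtor} together with the duality description of $\spAp D^{\geq 0}$. First I would verify that $p(C) = \tfrac12(\dim(C) + \uchi(-A_C))$ is integer-valued; this is exactly the parity hypothesis $\dim(C) + \uchi(-A_C) \in 2\bbZ$. Next I would show $p$ is a perversity with respect to $\uchi$ in the sense of definition \ref{pfan}. By the remark following that definition it suffices to check, for $C$ a codimension one face of $C'$, the two inequalities
$$0 \leq p(C') - p(C) \leq 1 + \uchi(-A_{C'}) - \uchi(-A_C).$$
Here $\dim(C') = \dim(C) + 1$, so $p(C') - p(C) = \tfrac12\bigl(1 + \uchi(-A_{C'}) - \uchi(-A_C)\bigr)$. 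Setting $t := \uchi(-A_{C'}) - \uchi(-A_C)$, the displayed chain becomes $0 \leq \tfrac12(1+t) \leq 1 + t$, i.e. $-1 \leq t$ and $0 \leq 1+t$, both of which are the single convexity hypothesis $\uchi(-A_{C'}) - \uchi(-A_C) + 1 \geq 0$. (Note $1+t$ is a nonnegative integer, and for such an integer $m$ one automatically has $0 \leq \tfrac{m}{2} \leq m$, so no extra parity argument is needed for the inequalities themselves.) Hence $p$ is a perversity function and theorem \ref{thmpervtor} applies, giving a bounded nondegenerate t-structure.

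It then remains to identify the dual perversity $\overline{p}$ with $p$ itself, for the dualizing complex $K_Y \otimes L(\uchi)$. By the formula in the proof of theorem \ref{thmpervtor} (using proposition \ref{alttor}, $K_C \cong \scrO[\cod(C)]$ and $\alt(C,L(\uchi)) = \uchi(-A_C)$), the dual perversity is
$$\overline{p}(C) = -\cod(C) + \uchi(-A_C) - p(C).$$
Since $\cod(C) = n - \dim(C)$, this equals $-n + \dim(C) + \uchi(-A_C) - \tfrac12(\dim(C)+\uchi(-A_C)) = -n + \tfrac12(\dim(C)+\uchi(-A_C)) = p(C) - n$. Thus $\overline{p} = p - n$. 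Shifting the dualizing complex by $[n]$ — i.e. replacing $K_Y \otimes L(\uchi)$ by $K_Y \otimes L(\uchi)[n]$ — shifts $\overline{p}$ by $n$ as well (the staggered codimension is shift-invariant, while the perversity bookkeeping absorbs the degree shift exactly as in the proof of theorem \ref{thmpervtor}), so with respect to $\bbD\scrF = \bR\underline{\Hom}(\scrF, K_Y \otimes L(\uchi)[n])$ the dual perversity becomes $\overline{p} + n = p$. By remarks \ref{dualperv}, $\spAp D^{\geq 0} = \{\scrF \mid \bbD\scrF \in {}^{\bA,\overline{p}}D^{\leq 0}\}$ with this dualizing complex reads $\spAp D^{\geq 0} = \{\scrF \mid \bbD\scrF \in \spAp D^{\leq 0}\}$, which together with the parallel statement $\spAp D^{\leq 0} = \{\scrF \mid \bbD\scrF \in \spAp D^{\geq 0}\}$ (apply $\bbD$ twice, using $\bbD^2 \cong \mathrm{id}$) says precisely that $\bbD$ exchanges $\spAp D^{\leq 0}$ and $\spAp D^{\geq 0}$, i.e. the t-structure is self-dual with respect to $\bbD$.

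The only real bookkeeping subtlety — and the step I expect to require the most care — is tracking the degree shift $[n]$ through the relation between $K_Y \otimes L$, the normalization $K_C \cong \scrO[\cod(C)]$ used in definition \ref{defperv} and proposition \ref{alttor}, and the dual-perversity formula $\overline{p}(C) = -\cod(C)+\uchi(-A_C)-p(C)$ that comes out of the computation in the proof of theorem \ref{thmpervtor}. Concretely one must confirm that inserting $[n]$ into the dualizing complex has the effect of adding the constant $n$ to $\overline{p}$ and nothing else; this follows because $\bR\underline{\Hom}(-, \bbD[n]) = \bR\underline{\Hom}(-,\bbD)[n]$ commutes past the cohomological-degree indexing in the defining conditions, shifting $k \mapsto k - n$ uniformly, which is absorbed into $p(C) - k$ by the constant shift of $\overline p$. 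All other steps are the routine arithmetic displayed above, and the hypotheses of the corollary have been chosen precisely so that the perversity inequalities and the self-duality constraint $\overline p = p$ both reduce to the two stated conditions.
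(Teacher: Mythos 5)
Your proof of the perversity inequalities is correct and matches the paper's. For self-duality you take a somewhat different route from the paper: rather than re-running the support computation from theorem \ref{thmpervtor} with the extra shift (the paper lays this out as a chain of five equivalent statements, the crucial one being the passage from $\langle \uchi_C - \theta, A_C\rangle \leq p(C)-k-n$ to $\langle\theta, A_C\rangle \geq p(C)-\ell$ with $\ell = -\cod(C)-k$), you work at the level of the dual perversity: you compute $\overline{p} = p - n$ directly from the formula in remarks \ref{dualperv}, and then argue that the $[n]$ shift in the dualizing complex adjusts $\overline{p}$ by a constant so as to recover $p$. This packaging is cleaner and avoids redoing the cohomology bookkeeping, at the cost of relying on the reader to unwind how a shift of the dualizing complex interacts with the formula $\spAp D^{\geq 0} = \{\scrF : \bbD\scrF \in {}^{\bA,\overline{p}}D^{\leq 0}\}$.

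The one place I would push back is the sentence claiming the shift $[n]$ raises $\overline{p}$ by $n$. With the standard convention $h^k(X[n]) = h^{k+n}(X)$, one has $\scrG[n] \in {}^{\bA,p}D^{\leq 0}$ if and only if $\scrG \in {}^{\bA,p+n}D^{\leq 0}$; applying this to $\scrG = \bbD_0\scrF$ gives that replacing $\omega$ by $\omega[n]$ \emph{lowers} the effective dual perversity by $n$ rather than raising it, and one would instead want $[-n]$ to promote $\overline p = p - n$ to $p$. This same sign tension is visible in the paper's own displayed support identity (where a term $2n$ appears to go missing), so it seems to reflect a convention discrepancy in the source rather than a conceptual gap in your argument; but since you explicitly single this out as the delicate step, it would be worth actually carrying the shift through the inequality defining $\spAp D^{\leq 0}$ to pin the sign down, rather than asserting the direction. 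Apart from this, the argument — integrality from the parity hypothesis, the codimension-one check, $\overline p = p$ up to the shift, and the $\bbD^2 \cong \mathrm{id}$ step to get both inclusions — is complete and sound.
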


\begin{proof}
It is clear that the inequality $\uchi(-A_{C'}) - \uchi(-A_C) + 1 \geq 0$ implies the inequalities
$$0 \leq \frac{1}{2}(\dim(C') + \uchi(-A_{C'})) - \frac{1}{2}(\dim(C) + \uchi(-A_C)) \leq 1 + \uchi(-A_{C'} )- \uchi(-A_C)$$
and so $p(C)$ is a perversity with respect to $\uchi$.

As in the proof of theorem \ref{thmpervtor}, and taking account of the shift-by-$n$ in $\bbD$, we find that for any $\scrF \in D^b_T(Y)$, a character $\xi$ is in $\supp_T(h^{k+n} \bL i_C^* \bbD \scrF)$ if and only if $\uchi_C - \xi$ is in $\supp_T(h^{-\cod(C) - k} \bR i_C^! \scrF)$.  From this, we may see that each of the following statements is equivalent to the one preceeding it:
\begin{enumerate}
\item $\bbD \scrF \in \spAp D^{\leq 0}$.
\item $\langle \xi,A_C \rangle \leq p(C) - k - n$ for all $C$ and for all $\xi \in \supp_T(h^{k+n} \bL i_C^* \bbD \scrF)$.
\item $\langle \uchi_C - \theta,A_C \rangle \leq p(C) - k - n$ for all $C$ and for all $\theta \in \supp_T(h^{-\cod(C) - k} \bR i_C^! \scrF)$.
\item $\langle \theta,A_C \rangle \geq p(C) - \ell$ for all $C$ and for all $\theta \in \supp_T(h^{\ell} \bR i_C^! \scrF)$
\item $\bR i_C^! \scrF \in D^{\geq 0}$ for all $C$
\end{enumerate}
It follows that $\scrF \in \spAp D^{\geq 0}$ if and only if $\bbD \scrF \in \spAp D^{\leq 0}$.  Since $\bbD \circ \bbD \scrF \cong \scrF$, it also follows that $\scrF \in \spAp D^{\leq 0}$ if and only if $\bbD \scrF \in \spAp D^{\geq 0}$.
\end{proof}

\emph{Acknowledgements:}  I would like to thank Pramod Achar for teaching me his theory of staggered sheaves, encouraging me to work out this example, and making many helpful comments on an early draft of this paper.

\end{document}